\numberwithin{equation}{section}
\newtheorem{thm}{Theorem}[section]
 \newtheorem{lem}{Lemma}[section]
 \newtheorem{prop}{Proposition}[section]
\theoremstyle{remark}
\newtheorem{rem}{Remark}[section]
\newtheorem*{rem*}{Remark}
\theoremstyle{definition}
\newcommand{\qn}{\Pi_N}
\begin{document}

\title[Phase-field equations]{characterizing the stabilization size for semi-implicit
Fourier-spectral method to phase field equations}

\author[D. Li]{Dong Li}
\address[D. Li]
{Department of Mathematics, University of British Columbia, 1984 Mathematics Road,
Vancouver, BC, Canada V6T1Z2}
\email{dli@math.ubc.ca}

\author[Z. Qiao]{Zhonghua Qiao}
\address[Z. Qiao]
{Department of Applied Mathematics, The Hong Kong Polytechnic University, Hung Hom, Hong Kong}
\email{zhonghua.qiao@polyu.edu.hk}

\author[T. Tang]{Tao Tang}
\address[T. Tang]
{Department of Mathematics \& Institute for Computational and Theoretical Studies,
Hong Kong Baptist University, Kowloon, Hong Kong}
\email{ttang@math.hkbu.edu.hk}

\subjclass{35Q35, 65M15, 65M70}

\keywords{Cahn-Hilliard, energy stable, large time stepping, epitaxy, thin film}

\begin{abstract}
Recent results in the literature provide computational evidence that stabilized
semi-implicit time-stepping method can efficiently simulate phase field
problems involving fourth-order nonlinear diffusion, with typical examples
like the Cahn-Hilliard equation and the thin film type equation. The up-to-date
theoretical explanation of the numerical stability relies on the assumption that
the derivative of the nonlinear potential function satisfies a Lipschitz type
condition, which in a rigorous sense, implies the boundedness of the numerical solution.
In this work we remove the Lipschitz assumption on the nonlinearity and prove unconditional
energy stability for the stabilized semi-implicit time-stepping methods. It is shown
that the size of stabilization term depends on the initial energy and the perturbation
parameter but is independent of the time step. The corresponding error analysis is also established
under  minimal nonlinearity and regularity assumptions.

\end{abstract}
\maketitle
\section{Introduction}
\setcounter{equation}{0}

In this work we consider two phase field models: the Cahn-Hilliard (CH) equation and the molecular
beam epitaxy equation (MBE) with slope selection. The Cahn-Hilliard equation was originally
developed in \cite{CH58} to describe phase separation in a two-component system (such as metal alloy). It
typically takes the form
\begin{align} \label{1}
\begin{cases}
\partial_t u  =\Delta( - \nu \Delta u +f(u) ), \quad (x,t) \in \Omega\times(0,\infty),\\
u \Bigr|_{t=0} =u_0,
\end{cases}
\end{align}
where $u=u(x,t)$ is a real-valued function which represents the difference between two concentrations.
Due to
this fact the equation \eqref{1} is invariant under the sign change $u\to -u$.
{Another common form for CH is
\begin{align}
\begin{cases}
\partial_t u =\Delta w\\
w=-\epsilon \Delta u +\epsilon^{-1} f(u).
\end{cases}
\end{align}
As $\epsilon\to 0$ the chemical potential $w$ tends to
a limit which solves the two-phase Hele-Shaw (Mullins-Sekerka) problem (see
\cite{Pego89} for a heuristic derivation, \cite{ABC94} for a convergence proof (under the assumption
that classical solution to the limiting Hele-Shaw problem exists)).}
In \eqref{1} the spatial domain $\Omega$ is taken to be the usual $2\pi$-periodic
torus $\mathbb T^2= \mathbb R^2/2\pi \mathbb Z^2$. For simplicity we only consider the periodic case but our analysis can be generalized to other settings
(such as bounded domain with Neumann boundary conditions). The free energy term $f(u)$ is given by
\begin{align}
f(u)=F^{\prime}(u)=u^3-u, \qquad F(u)=\frac 14(u^2-1)^2.
\end{align}
The parameter $\nu>0$ is often called diffusion coefficient. Usually one is interested in the physical regime
$0<\nu \ll 1$ in which the dynamics of \eqref{1} is close to the limiting Hele-Shaw problem after some transient time.

For smooth solutions to \eqref{1}, the total mass is conserved:
\begin{align}
\frac d {dt} M(t)\equiv 0, \qquad M(t) = \int_{\Omega}u(x,t) dx.
\end{align}
In particular $M(t)\equiv 0$ if $M(0)=0$. Throughout this work we will only consider initial data $u_0$
with mean zero. On the Fourier side this implies the zeroth mode $\hat u(0)=0$. One can then
define fractional Laplacian $|\nabla|^s u$ for $s<0$ (see \eqref{def_fractional} for the definition of
$|\nabla|^s=(-\Delta)^{s/2}$). The energy functional associated with \eqref{1} is
\begin{align}
E(u) =\int_{\Omega} \left( \frac 12 \nu |\nabla u|^2 +F(u) \right) dx. \label{CH_energy}
\end{align}
As is well known,  \eqref{1} can be regarded as a gradient flow of $E(u)$ in $H^{-1}$. The basic energy
identity takes the form
\begin{align} \label{intro_-10}
\frac d {dt} E(u(t)) + \| |\nabla|^{-1} \partial_t u \|_2^2=0.
\end{align}
Note that $\partial_t u$ has mean zero and $|\nabla|^{-1} \partial_t u$ is well-defined. Alternatively
to avoid using $|\nabla|^{-1}$, one can rewrite \eqref{intro_-10} as
\begin{align}
\frac d {dt} E(u(t)) + \int_{\Omega}| \nabla( -\nu \Delta u + f(u) )|^2 dx =0.
\end{align}
It follows from the energy identity that
\begin{equation}\label{d1}
E(u(t)) \le E(u(s)), \quad \forall \;\; t\ge s.
\end{equation}
This gives a priori control
of $H^1$-norm of the solution. The global wellposedness of \eqref{1} is not an issue thanks to this fact.

There is by now an extensive literature on the numerical simulation
of the CH equation and related phase field models, see, e.g.,
\cite{BJL11,CNR10,CS98,CJPWW14,Du09,GHu11,HLT07, SY10,ZCST99} and the
references therein. On the analysis side,
it is noted that Feng and Prohl \cite{FP04} gave the error
analysis of a semidiscrete (in time) and fully discrete finite
element method for CH. Under a certain spectral assumption on the
linearized CH operator (more precisely, one has to assume
the existence of classical solutions to the corresponding Hele-Shaw
problem), they proved an error bound which depends on $1/\nu$
polynomially.

It is known that
explicit schemes usually suffer severe time-step restrictions and
generally do not obey energy conservation. To enforce the energy
decay property and increase the time step, a good alternative is to
use implicit-explicit (semi-implicit) schemes in which the linear
part is treated implicitly (such as backward differentiation in
time) and the nonlinear part is evaluated explicitly. For example,
in \cite{CS98} Chen and Shen considered the semi-implicit
Fourier-spectral scheme for \eqref{1} (set $\nu=1$)
\begin{align}
\frac{\widehat{u^{n+1}}(k)-\widehat{u^n}(k)} {\Delta t}= - |k|^4 \widehat{u^{n+1}}(k)
-|k|^2  \widehat{f(u^n)} (k),
\end{align}
where $\widehat{u^n}$ denotes the Fourier coefficient of $u$ at time step $t_n$.
 On the other hand, the semi-implicit schemes can generate large truncation errors. As a result
smaller time steps are usually required to guarantee accuracy and (energy)
stability. To resolve this issue, a class
of large time-stepping methods were proposed and analyzed in
\cite{FTY13,HLT07,SY10,XT06,ZCST99}. The basic
idea is to add an $O(\Delta t)$ stabilizing term to the numerical scheme to alleviate the time step constraint
whilst keeping energy stability. The choice of the $O(\Delta t)$ term is quite flexible. For example,
in \cite{ZCST99} the authors considered the Fourier spectral approximation of the modified CH-Cook
equation
\begin{align}
\partial_t C= \nabla \cdot \bigl( (1-a C^2) \nabla (C^3-C-\kappa \nabla^2 C ) \bigr).
\end{align}
The explicit Fourier spectral scheme is (see (16) in \cite{ZCST99})
\begin{align}
\frac {\widehat{C^{n+1}}(k,t)- \widehat{C^n}(k,t)} {\Delta t}
= ik \cdot \bigl\{ (1-a C^2) [ i k^{\prime} ( \{ -C+C^3 \}_{k^{\prime}}^n
+ \kappa |k^{\prime}|^2 \widehat{C^n}(k^{\prime},t)) ]_r \bigr\}_k.
\label{intro_300}
\end{align}
The time step for the above scheme has a severe constraint
\begin{align}
\Delta t \cdot \kappa \cdot K^4 \le 1,
\end{align}
where $K$ is the number of Fourier modes in each coordinate
direction. To increase
the allowed time step,
the authors of \cite{ZCST99}
added a term $-A k^4 ( \widehat{C^{n+1}} - \widehat{C^n})$ to the right-hand side
of \eqref{intro_300}. Note that on the real side, this term corresponds to a fourth order dissipation, i.e.
\begin{align*}
-A \Delta^2 (C^{n+1}-C^n)
\end{align*}
which roughly is of order $O(\Delta t)$.

In \cite{HLT07},
a stabilized semi-implicit scheme was considered for the CH model,
with the use of an order $O(\Delta t)$ stabilization term
\[
A\Delta(u^{n+1}-u^n).
\]
Under a condition on $A$ of the form
\begin{align} \label{bdd_earlier_2}
A \ge \max_{x\in \Omega} \Bigl\{ \frac 12 |u^n(x)|^2+\frac 14 |u^{n+1}(x)+u^n(x)|^2 \Bigr\}-\frac 12,
\qquad\forall\, n\ge0,
\end{align}
 one can obtain energy stability (\ref{d1}).
Note that the condition \eqref{bdd_earlier_2}
 depends nonlinearly on the numerical solution.
In other words, it implicitly uses the $L^\infty$-bound assumption
on $u^n$ in order to make $A$ a controllable constant.

  In \cite{SY10}, Shen and
Yang proved energy stability of semi-implicit schemes for the Allen-Cahn and the CH equations with truncated
nonlinear term. More precisely it is assumed that
\begin{align}
\max_{u\in \mathbb R} |f^{\prime}(u)| \le L
\end{align}
which is what we referred to as the Lipschitz assumption on the nonlinearity in the abstract. The same assumption was
adopted
recently in \cite{FTY13} to analyze stabilized Crank-Nicolson or
Adams-Bashforth scheme for both the Allen-Cahn and CH equations.

In a recent work  \cite{BJL11}, Bertozzi, Ju, and Lu considered a nonlinear diffusion model of the
form
\begin{align*}
\partial_t u = - \nabla \cdot ( f(u) \nabla \Delta u) + \nabla \cdot (g(u) \nabla u),
\end{align*}
where $g(u)=f(u) \phi^{\prime} (u)$, and $f$, $\phi$ are given smooth functions. In addition $f$ is assumed to be
non-negative. The numerical scheme considered in \cite{BJL11} takes the form
\begin{align}
\frac{u^{n+1}-u^n} {\Delta t}= -A \Delta^2 (u^{n+1}-u^n)
-\nabla \cdot (f (u^n) \nabla \Delta u^n) + \nabla \cdot( g(u^n) \nabla u^n),
\end{align}
where $A>0$ is a parameter to be taken large. One should note the striking similarity between this scheme
and the one introduced in \cite{ZCST99}. In particular in both papers the biharmonic stabilization of the
form $-A\Delta^2 (u^{n+1}-u^n)$ was used. The analysis in \cite{BJL11} is carried out under the additional
assumption that
\begin{align}
\sup_{n} \| f(u^n )\|_{\infty} \le A <\infty.
\end{align}
This is reminiscent of the $L^{\infty}$ bound on $u^n$.

Roughly speaking, all prior analytical developments are conditional in the sense
that either one makes a Lipschitz assumption on the nonlinearity, or one assumes certain a priori $L^{\infty}$
bounds on the numerical solution. It is very desirable to \emph{remove these technical restrictions} and establish
a more reasonable stability theory. Thus we consider the following.
Problem: {\em prove unconditional energy stability of large time-stepping semi-implicit numerical
schemes for general phase field models.}

Here unconditional means that no restrictive assumptions should be imposed on the time step. Of course one should
also develop the corresponding error analysis under minimal regularity and smoothness conditions.

The purpose of this work is to settle this problem for the spectral Galerkin case.
In a forthcoming work \cite{LQT14p1},
we shall
analyze the finite difference schemes for the CH model by using a completely different approach.

We now state our main results. We first consider a stabilized semi-implicit scheme introduced in
\cite{HLT07} following the earlier work \cite{XT06}. It takes the form
\begin{align} \label{semi_e1}
\begin{cases}
\displaystyle\frac{u^{n+1}-u^n}{\tau} = - \nu \Delta^2 u^{n+1} + A \Delta (u^{n+1}-u^n) +\Delta \qn ( f(u^n) ), \quad n\ge 0,
\\
u^0= \Pi_N u_0.
\end{cases}
\end{align}
where $\tau>0$ is the time step, and $A>0$ is the coefficient for the $O(\tau)$ regularization term.
For each integer $N\ge 2$, define
\begin{align*}
X_N= \operatorname{span}\Bigl\{ \cos (k\cdot x), \sin(k\cdot x):\quad k=(k_1,k_2)\in \mathbb Z^2, \; |k|_{\infty}=\max\{|k_1|, |k_2|\} \le N
\Bigr\}.
\end{align*}
Note that the space $X_N$ includes the constant function (by taking $k=0$).
The $L^2$ projection operator $\Pi_N: \, L^2(\Omega) \to X_N$ is defined by
\begin{align}
(\Pi_N u-u, \phi)=0, \qquad \forall\,\phi \in X_N,
\end{align}
where $(\cdot,\cdot)$ denotes the usual $L^2$ inner product on $\Omega$. In yet other words, the operator $\Pi_N$ is simply
the truncation of Fourier modes of $L^2$ functions to $|k|_{\infty}\le N$. Since  $\Pi_N u_0 \in X_N$,
by induction it is easy to check that  $u^n \in X_N$
for all $n\ge 0$. Note that one can recast \eqref{semi_e1} into the usual weak formulation, for example:
\begin{align*}
(d_t u^{n+1}, v) + A ( \nabla (u^{n+1}-u^n), \nabla v)
+ ( \nabla (f(u^n)), \nabla v) + \nu (\Delta u^{n+1}, \Delta v)=0,
\quad \forall\, v \in X_N,
\end{align*}
where $d_t u^{n+1}=(u^{n+1}-u^n)/\tau$.
However in our analysis it is more convenient to work with \eqref{semi_e1}.
Note that  $u^n$ has mean zero  for all $n\ge 0$ (since we assume $u_0$ has mean zero).

\begin{thm}[Unconditional energy stability for CH] \label{thm_es_1}
Consider \eqref{semi_e1} with $\nu>0$ and assume $u_0 \in H^2(\Omega)$
with mean zero.
Denote $E_0=E(u_0)$ the initial energy.
There exists a constant $\beta_c>0$ depending only on $E_0$ such that if
\begin{align} \label{A_bound_later}
A \ge \beta \cdot( \|u_0\|_{H^2}^2+\nu^{-1} |\log \nu|^2+1 ),\quad \beta\ge \beta_c,
\end{align}
then
\begin{align*}
E(u^{n+1}) \le E(u^n), \qquad\forall\, n\ge 0,
\end{align*}
where $E$ is defined by (\ref{CH_energy}).
\end{thm}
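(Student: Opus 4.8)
The plan is to run a discrete analogue of the energy dissipation identity \eqref{intro_-10} and then reduce the whole statement to a single uniform $L^\infty$ bound on the iterates. First I would rewrite \eqref{semi_e1} in the divergence form
\begin{align*}
d_t u^{n+1} = \Delta w^{n+1}, \qquad w^{n+1} := -\nu\Delta u^{n+1} + A(u^{n+1}-u^n) + \Pi_N(f(u^n)),
\end{align*}
and test against $w^{n+1}$. Since $(\Delta w^{n+1}, w^{n+1}) = -\|\nabla w^{n+1}\|_2^2 \le 0$, and $d_t u^{n+1}\in X_N$ has mean zero (so that $(d_t u^{n+1}, \Pi_N f(u^n)) = (d_t u^{n+1}, f(u^n))$), expanding the three contributions of $w^{n+1}$, multiplying by $\tau$, and using the elementary identity $2a\cdot(a-b) = |a|^2 - |b|^2 + |a-b|^2$ on the gradient term, I arrive at
\begin{align*}
E(u^{n+1}) - E(u^n) \le -A\|u^{n+1}-u^n\|_2^2 - \frac{\nu}{2}\|\nabla(u^{n+1}-u^n)\|_2^2 + \int_\Omega R^n\,dx,
\end{align*}
where $R^n = F(u^{n+1}) - F(u^n) - f(u^n)(u^{n+1}-u^n)$ is the Taylor defect of the potential. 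Because $F$ is a quartic, $R^n$ can be computed \emph{exactly}:
\begin{align*}
R^n = (u^{n+1}-u^n)^2\Bigl( \tfrac14(u^{n+1}+u^n)^2 + \tfrac12 (u^n)^2 - \tfrac12 \Bigr).
\end{align*}

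With this identity in hand, the statement reduces to absorbing $\int R^n$ into the good term $-A\|u^{n+1}-u^n\|_2^2$, and the key point I would stress is that this reduction is \emph{pointwise}: if I can produce a constant $M$ with $\|u^m\|_\infty \le M$ for all $m$, then the bracket in $R^n$ is bounded by $\tfrac14(2M)^2 + \tfrac12 M^2 = \tfrac32 M^2$, whence $\int R^n \le \tfrac32 M^2\|u^{n+1}-u^n\|_2^2$, and the choice $A \ge \tfrac32 M^2$ already forces $E(u^{n+1}) \le E(u^n)$ (the gradient dissipation being a bonus). Thus everything comes down to a uniform-in-$n$, uniform-in-$\tau$, uniform-in-$N$ bound $\|u^n\|_\infty \le M$ with $M^2 \lesssim \|u_0\|_\infty^2 + \nu^{-1}|\log\nu|^2 + 1$, which is precisely the size dictating the admissible $A$ in \eqref{A_bound_later}.

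I would establish this $L^\infty$ bound by induction, coupled to the energy decay itself: assuming $E(u^m)\le E_0$ and $\|u^m\|_\infty \le M$ for $m \le n$, the energy control gives the $H^1$ bound $\|\nabla u^m\|_2^2 \le 2E_0/\nu$ and, through $\int F(u^m)\le E_0$, an $L^4$ bound $\|u^m\|_4 \lesssim 1$. The mechanism to upgrade $H^1$ to $L^\infty$ in two dimensions is a logarithmic (Brezis--Gallouet type) Sobolev inequality of the form $\|u\|_\infty \lesssim \|u\|_{H^1}\bigl(1 + \log(e + \|u\|_{H^2}/\|u\|_{H^1})\bigr)^{1/2}$; feeding in $\|u^m\|_{H^1}\lesssim \nu^{-1/2}$ together with a crude, $\tau$- and $N$-independent bound on $\|u^m\|_{H^2}$ extracted from the scheme yields $\|u^m\|_\infty^2 \lesssim \nu^{-1}|\log\nu|^2$, while the $\|u_0\|_\infty$ contribution is carried through the base step of the induction. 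Closing the induction then returns the sharp energy estimate via the pointwise bound above.

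The main obstacle is precisely this $L^\infty$ bound, and specifically its uniformity in the time step. The difficulty is the presence of $u^{n+1}$ in $R^n$: to control $\|u^{n+1}\|_\infty$ one needs an a priori estimate on $u^{n+1}$ \emph{before} its energy decay is known, so the logarithmic Sobolev input must be fed by a regularity bound on $u^{n+1}$ (controlling the $\|u^{n+1}\|_{H^2}$ appearing inside the logarithm) that does not deteriorate as $\tau\to\infty$ or $N\to\infty$. Equivalently, for large time steps the jump $v := u^{n+1}-u^n$ can be large, and a naive Gagliardo--Nirenberg estimate of $\int v^2(u^{n+1})^2$ produces a quartic term of order $\|v\|_2^2\|\nabla v\|_2^2$ which cannot be absorbed into $A\|v\|_2^2 + \tfrac\nu2\|\nabla v\|_2^2$ for high-frequency $v$; only a genuine pointwise bound on $u^{n+1}$ avoids this. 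Extracting from the implicit structure of \eqref{semi_e1} a $\tau$-robust regularity bound that feeds the logarithmic Sobolev inequality, and thereby pinning down the sharp $\nu^{-1}|\log\nu|^2$ dependence rather than a worse power, is the technical heart of the argument.
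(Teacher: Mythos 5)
Your overall architecture coincides with the paper's: the exact quartic Taylor defect $R^n$ (your formula agrees with the expansion in Lemma \ref{z2}, whose right-hand side is $\tfrac14(u^{n+1}-u^n)^2\bigl(3(u^n)^2+(u^{n+1})^2+2u^nu^{n+1}-2\bigr)$), the reduction to a pointwise bound on the iterates, and an induction coupling energy decay to a Brezis--Gallouet type inequality fed by a regularity bound extracted from the resolvent form of \eqref{semi_e1}. But there is a genuine gap exactly at what you call the technical heart, and it is compounded by a term you discard. The scheme does not yield a $\tau$-uniform regularity bound: Lemma \ref{z1} gives $\|u^{n+1}\|_{H^{3/2}}\le c_1\bigl(\tfrac{A+1}{\nu}+\tfrac{1}{A\tau}\bigr)(E_n+1)$, and the $\tfrac{1}{A\tau}$ factor is unavoidable (for small $\tau$ one implicit step provides little smoothing relative to the explicit forcing $\Delta\Pi_N f(u^n)$). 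Consequently the log-interpolation only gives $\|u^n\|_\infty\lesssim_{E_0}\nu^{-1/2}(1+\log A+|\log\nu|+|\log\tau|)$, so the uniform-in-$\tau$ bound $M^2\lesssim \|u_0\|_\infty^2+\nu^{-1}|\log\nu|^2+1$ that your argument ``comes down to'' does not follow from this route, and with the choice \eqref{A_bound_later} alone you cannot absorb the $\nu^{-1}|\log\tau|^2$ (and, for $u^{n+1}$, up to $\nu^{-3}|\log\tau|^6$) contributions as $\tau\to 0$.

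The paper closes this by retaining precisely the term you dismiss as a bonus: testing against $(-\Delta)^{-1}(u^{n+1}-u^n)$ produces $\tfrac1\tau\||\nabla|^{-1}(u^{n+1}-u^n)\|_2^2$ on the good side, which combined with $\tfrac{\nu}{2}\|\nabla(u^{n+1}-u^n)\|_2^2$ via Cauchy--Schwarz yields an extra $\sqrt{2\nu/\tau}\,\|u^{n+1}-u^n\|_2^2$ of dissipation. The proof then splits into two regimes: if $\sqrt{\nu/\tau}\ge C_{E_0}\nu^{-3}|\log\tau|^6$, this extra term swallows all the $|\log\tau|$ contributions and $A\gtrsim_{E_0}\nu^{-1}|\log\nu|^2$ suffices; otherwise $|\log\tau|\lesssim_{E_0}1+|\log\nu|$, so the $\log\tau$ factors are themselves of size $|\log\nu|$ and the same $A$ works. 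Without this dichotomy (or a substitute for it), your induction does not close for small time steps. Two smaller points: the relevant degeneration of the regularity bound is as $\tau\to 0$, not $\tau\to\infty$ as you suggest; and to estimate $\|u^{n+1}\|_\infty$ before its energy decay is known you also need the a priori propagation $\|u^{n+1}\|_{\dot H^1}\le(1+\tfrac1A+\tfrac3A\|u^n\|_\infty^2)\|u^n\|_{\dot H^1}$ from the resolvent form, which your sketch presupposes but does not supply.
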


\begin{rem}
We stress that the above stability result works for any time step $\tau>0$. In particular the condition on the
parameter $A$
is independent of $\tau$. In order to keep the
argument simple, we do not try to optimize
the dependence of $A$ on the diffusion coefficient $\nu$ or the initial data $u_0$.
This can certainly be pushed further. For example, a close inspection of the proof of Theorem \ref{thm_es_1}
shows that it suffices to take $A$ such that
\begin{align*}
A \gg_{E_0} \|u_0\|_{\star}^2 + \nu^{-1} |\log \nu|^2+1,
\end{align*}
where
\begin{align*}
\|u_0 \|_{\star} = \sup_{N} \| \Pi_N u_0\|_{\infty}.
\end{align*}
The appearance of $\|u_0\|_{H^2}$ in \eqref{A_bound_later}
 is due to the embedding $\|u_0\|_{\star} \lesssim \| u_0\|_{H^2}$.  Alternatively one can replace
 the $H^2$-norm by weaker Besov norms. However we shall not dwell on this issue here further.
\end{rem}

\begin{rem}
One should note that in \eqref{A_bound_later}, the lower bound $\nu^{-1} |\log \nu|^2$ is formally consistent
with the predicted bound \eqref{bdd_earlier_2}. In terms of the PDE solution $u(t,x)$, the bound
\eqref{bdd_earlier_2} roughly asserts that
\begin{align*}
A \ge O( \| u(t)\|_{\infty}^2).
\end{align*}
For the PDE solution, there is no $L^{\infty}$ conservation and one has to trade it with the $\dot H^1(\mathbb T^2)$ (see \eqref{def_Hdots})
bound with some logarithmic correction. The energy conservation gives $\| u(t) \|_{H^1} \lesssim \nu^{-\frac 12}$,
and the log-correction gives $|\log (\nu)|$. Thus we need $A\gtrsim \nu^{-1} |\log \nu|^2$ from this heuristic argument.
 \end{rem}

There is an analogue of Theorem \ref{thm_es_1} for the MBE equation. The MBE equation
has the form
\begin{align} \label{mbe}
\begin{cases}
\partial_t h = -\nu \Delta^2 h + \nabla \cdot (g (\nabla h) ) , \qquad (x,t) \in \Omega \times (0,\infty), \\
h\Bigr|_{t=0} =h_0,
\end{cases}
\end{align}
where $h=h(x,t):\, \Omega \times \mathbb R \to \mathbb R$ represents the scaled height function of a thin film equation, and $g(z)=(|z|^2-1)z$ for
$z\in \mathbb R^2$. The domain $\Omega$ is again assumed to be the periodic torus $\mathbb T^2$. Equation
\eqref{mbe} can be regarded as an $L^2$ gradient flow of the energy functional
\begin{align}
E(h)= \frac {\nu}2 \| \Delta h\|_2^2 + \int_{\Omega} G(\nabla h) dx, \label{MBE_energy}
\end{align}
where $G(z)=\frac 14 (|z|^2-1)^2$ for $z \in \mathbb R^2$. Note the striking similarity between
the MBE energy \eqref{MBE_energy} and the CH energy \eqref{CH_energy}. Roughly speaking, $\nabla h$ is the
correct scaling analogue of $u$ in \eqref{1}. In fact it is well known that in one dimension the MBE equation can be
transformed into the CH equation through the change of variable $u=\partial_x h$. In recent \cite{LQT14} we
obtained new upper and lower gradient bounds for the MBE equation in dimensions $d\le 3$. A refined well-posedness
theory is also worked out there. Some of these results will be used in the $H^1$ error analysis in this work.
We refer to the introduction of \cite{LQT14} and also
\cite{EH66, BL13,BL14,LL03,L13,SWWW12,WWW10} for some background material and related well-posedness/ill-posedness results.


Consider the following semi-implicit scheme for MBE:
\begin{align} \label{semi_e2}
\begin{cases}
\displaystyle\frac{h^{n+1}-h^n}{\tau} = - \nu \Delta^2 h^{n+1} + A \Delta (h^{n+1}-h^n) + \qn \nabla \cdot ( g(\nabla h^n) ),
\quad n\ge 0, \\
h^0 = \Pi_N h_0.
\end{cases}
\end{align}
This scheme was introduced and analyzed in \cite{XT06} (see also
\cite{QZT11}). The authors of \cite{XT06} first introduced the
stabilized $O(\Delta t)$ term of the form $A \Delta (h^{n+1}-h^n)$
as given in (\ref{semi_e2}). They also proved that
the energy stability (\ref{d1}) under the condition
\begin{align} \label{bdd_earlier_1}
A \ge \frac 12 \| \nabla h^n \|_{\infty}^2 + \frac 14 \| \nabla (h^{n+1}+h^n) \|_{\infty}^2 -\frac 12, \qquad\forall
\, n\ge 0.
\end{align}
Again, it is seen that $A$ depends implicitly on the $L^\infty$ bound on the
numerical solution $h^n$.

The result below will provide a clean description on the size of the constant
$A$, in the sense that  $A$ is independent of  the $L^\infty$ bound on
the numerical solution.

\begin{thm}[Unconditional energy stability for MBE] \label{thm_es_2}
Consider \eqref{semi_e2} with $\nu>0$. Assume $h_0 \in H^3(\Omega)$ with mean zero.
There exists a constant $\beta_c>0$ depending only on $E_0$  such that if
\begin{align*}
A \ge \beta \cdot (\| h_0\|_{H^3}^2+\nu^{-1} |\log \nu|^2+1),\quad \beta\ge \beta_c,
\end{align*}
then
\begin{align*}
E(h^{n+1}) \le E(h^n), \qquad\forall\, n\ge 0,
\end{align*}
where $E$ is defined by (\ref{MBE_energy}).
\end{thm}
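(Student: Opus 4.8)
The plan is to derive a discrete analogue of the energy identity \eqref{intro_-10} by testing the scheme against the increment, and then to show that a sufficiently large stabilization coefficient $A$ dominates the remainder coming from the non-Lipschitz nonlinearity. Write $w=h^{n+1}-h^n$. Taking the $L^2$ inner product of \eqref{semi_e2} with $w$, using periodic integration by parts and the fact that $\Pi_N$ is the self-adjoint $L^2$-projection onto $X_N$ with $w\in X_N$, I obtain
\begin{align*}
\frac1\tau\|w\|_2^2=-\nu(\Delta h^{n+1},\Delta w)-A\|\nabla w\|_2^2-(g(\nabla h^n),\nabla w).
\end{align*}
On the other hand, using $\|a\|_2^2-\|b\|_2^2=2(a,a-b)-\|a-b\|_2^2$ for the $\frac\nu2\|\Delta h\|_2^2$ part of $E$ and writing the potential difference as $\int_\Omega(G(\nabla h^{n+1})-G(\nabla h^n))\,dx=(g(\nabla h^n),\nabla w)+R$ with $\nabla G=g$, the two copies of $(g(\nabla h^n),\nabla w)$ cancel and I arrive at the clean identity
\begin{align*}
E(h^{n+1})-E(h^n)=-\frac1\tau\|w\|_2^2-A\|\nabla w\|_2^2-\frac\nu2\|\Delta w\|_2^2+R,
\end{align*}
where $R=\int_\Omega[G(\nabla h^{n+1})-G(\nabla h^n)-g(\nabla h^n)\cdot\nabla w]\,dx$ is the second-order Taylor remainder of $G$. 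Everything now hinges on absorbing $R$ into the negative terms.

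Next I would estimate $R$ through the Hessian of $G$. Since $G(z)=\frac14(|z|^2-1)^2$ gives $D^2G(z)=2\,z\otimes z+(|z|^2-1)I$, one has the pointwise bound $\xi^TD^2G(z)\xi=2(z\cdot\xi)^2+(|z|^2-1)|\xi|^2\le 3|z|^2|\xi|^2$. Writing $R$ in integral-remainder form and noting that the intermediate point $\nabla h^n+s\nabla w$ ($0\le s\le1$) satisfies $|\nabla h^n+s\nabla w|^2\le\max(|\nabla h^n|^2,|\nabla h^{n+1}|^2)$ by convexity, I get
\begin{align*}
R\le \tfrac32\big(\|\nabla h^n\|_\infty^2+\|\nabla h^{n+1}\|_\infty^2\big)\|\nabla w\|_2^2.
\end{align*}
Consequently, as soon as $A\ge\frac32(\|\nabla h^n\|_\infty^2+\|\nabla h^{n+1}\|_\infty^2)$ the identity forces $E(h^{n+1})-E(h^n)\le-\frac1\tau\|w\|_2^2-\frac\nu2\|\Delta w\|_2^2\le0$. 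Thus energy stability is reduced to an \emph{a priori} $L^\infty$-bound for $\nabla h^m$ that does not presuppose any bound on the numerical solution.

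The heart of the matter is to control $\|\nabla h^m\|_\infty$ purely in terms of the energy, and here the argument runs by induction on $n$. Assuming $E(h^k)\le E_0$ for all relevant $k$, the energy bound yields $\|\Delta h^m\|_2^2\le 2E_0/\nu$, while the potential part gives the $\nu$-independent bound $\|\nabla h^m\|_4\lesssim(E_0+1)^{1/4}$. Feeding these into a logarithmic Sobolev (Brezis--Gallouet type) inequality on $\mathbb T^2$ produces a bound of the form $\|\nabla h^m\|_\infty^2\lesssim \|\nabla h_0\|_\infty^2+\nu^{-1}|\log\nu|^2+1$, exactly the scaling predicted heuristically in the Remark following Theorem \ref{thm_es_1} ($\|\nabla h\|_{\dot H^1}\lesssim\nu^{-1/2}$ traded against $L^\infty$ at the cost of one logarithm). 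Choosing $\beta_c$ so that $A\ge\beta(\|\nabla h_0\|_\infty^2+\nu^{-1}|\log\nu|^2+1)$ dominates $\frac32(\|\nabla h^n\|_\infty^2+\|\nabla h^{n+1}\|_\infty^2)$ then closes the induction and yields $E(h^{n+1})\le E(h^n)$.

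The main obstacle is precisely this a priori bound, in two respects. First, the logarithmic Sobolev estimate must be made \emph{uniform in $N$}: the higher-order norm that naturally sits inside the logarithm is only controlled with an $N$-dependent constant on $X_N$, and care is needed to ensure that only $|\log\nu|$ (and not $\log N$) survives, so that the final condition on $A$ is mesh-independent as stated. Second, there is a chicken-and-egg difficulty: bounding $\|\nabla h^{n+1}\|_\infty$ appears to require the energy bound $E(h^{n+1})\le E_0$ that we are trying to prove. I would resolve this by a continuity/bootstrap argument in the time step $\tau$: at $\tau=0$ one has $h^{n+1}=h^n$ so $E(h^{n+1})=E(h^n)\le E_0$, the map $\tau\mapsto h^{n+1}(\tau)$ is continuous, and on the range where $E(h^{n+1})$ stays in a fixed neighbourhood of $E_0$ the sharp estimate strictly improves the bound, so the good set is open and closed and hence all of $(0,\infty)$. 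The CH case (Theorem \ref{thm_es_1}) is handled by the identical scheme with $\nabla h$ replaced by $u$ and $G$ by $F$.
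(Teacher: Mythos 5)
Your first half is sound and coincides with the paper's Lemma \ref{z2p}: testing \eqref{semi_e2} with $h^{n+1}-h^n$, using the identity $b\cdot(b-a)=\frac12(|b|^2-|a|^2+|b-a|^2)$ for the $\frac{\nu}{2}\|\Delta h\|_2^2$ part, and bounding the Taylor remainder of $G$ via $|\xi^{T}D^2G(z)\xi|\le 3|z|^2|\xi|^2$ gives exactly the discrete energy inequality with the $\frac32\max\{\|\nabla h^n\|_\infty^2,\|\nabla h^{n+1}\|_\infty^2\}$ right-hand side. The reduction to the a priori bound $A\gtrsim\|\nabla h^n\|_\infty^2+\|\nabla h^{n+1}\|_\infty^2$ is correct.

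The gap is in the step you yourself call the heart of the matter. You propose to bound $\|\nabla h^m\|_\infty$ from the induction hypothesis $E(h^k)\le E_0$ alone, via $\|\Delta h^m\|_2^2\le 2E_0/\nu$, $\|\nabla h^m\|_4\lesssim(E_0+1)^{1/4}$, and a Brezis--Gallouet inequality. But the Brezis--Gallouet inequality on $\mathbb T^2$, in the form \eqref{log212}, reads $\|\nabla h\|_\infty\lesssim\|\nabla h\|_{\dot H^1}\log(3+\|\nabla h\|_{H^{s}})$ with $s>1$ \emph{strictly}: some norm above the critical regularity must sit inside the logarithm. The energy controls only $\|\nabla h^m\|_{\dot H^1}=\|\Delta h^m\|_2$ (and an $L^4$ norm of $\nabla h^m$), i.e.\ exactly the borderline quantity and nothing stronger, so no bound of the form $\|\nabla h^m\|_\infty^2\lesssim\nu^{-1}|\log\nu|^2+\dots$ follows from the energy alone. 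You flag the danger that the higher norm might cost a factor of $\log N$, but you supply no mechanism for producing an $N$-independent higher norm at all. The paper's resolution — which is the missing idea — is Lemma \ref{z1p}: the resolvent $(1+\nu\tau\Delta^2-A\tau\Delta)^{-1}$ in the implicit scheme smooths, yielding $\|h^{n+1}\|_{H^{5/2}}\le c_1\bigl(\frac{A+1}{\nu}+\frac1{A\tau}\bigr)(E_n+1)$ uniformly in $N$; this bound is carried as a \emph{second} induction hypothesis alongside $E_n\le E_0$ and is what goes inside the logarithm, producing $\log A+|\log\nu|+|\log\tau|$. The resulting powers of $|\log\tau|$ must then be absorbed, which the paper does by retaining the extra term $\sqrt{2\nu/\tau}\,\|\nabla(h^{n+1}-h^n)\|_2^2$ on the left of the energy inequality (when $\tau$ is very small) or by noting $|\log\tau|\lesssim 1+|\log\nu|$ (otherwise). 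Without these two ingredients your condition on $A$ cannot be verified. Incidentally, the same Lemma \ref{z1p} also dissolves your chicken-and-egg problem without any continuity-in-$\tau$ bootstrap: $\|h^{n+1}\|_{\dot H^2}$ is bounded directly from the scheme by $(1+\frac1A+\frac3A\|\nabla h^n\|_\infty^2)\|h^n\|_{\dot H^2}$, so $\|\nabla h^{n+1}\|_\infty$ is controlled using only level-$n$ information.
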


We now state the results for error estimates. We start with the CH equation.

\begin{thm}[$L^2$ error estimate for CH] \label{thm_CH_L2}
Let $\nu>0$. Let $u_0 \in H^s$, $s\ge 4$ with mean zero. Let $u(t)$ be the solution to \eqref{1} with initial
data $u_0$. Let $u^n$ be defined according to \eqref{semi_e1} with initial data $\Pi_N u_0$. Assume
$A$ satisfies  the same condition in Theorem \ref{thm_es_1}.
Define $t_m= m \tau$, $m\ge 1$. Then
\begin{align*}
\| u(t_m) - u^m \|_2
\le A\cdot e^{C_1 t_m} \cdot C_2 \cdot (N^{-s} +\tau).
\end{align*}
Here $C_1>0$ depends only on $(u_0,\nu)$, $C_2>0$ depends on $(u_0,\nu,s)$.
\end{thm}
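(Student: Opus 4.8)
The plan is to split the error as $u(t_m)-u^m=\rho^m+e^m$, where $\rho^m=(I-\Pi_N)u(t_m)$ is the spectral truncation error and $e^m=\Pi_N u(t_m)-u^m\in X_N$ is the genuine scheme error. Since \eqref{1} is globally well posed and parabolic-smoothing, for $u_0\in H^s$ with $s\ge 4$ the exact solution obeys a uniform bound $\sup_{t\le t_m}\|u(t)\|_{H^s}\le C(u_0,\nu,s)$, and standard spectral estimates give $\|\rho^m\|_2\lesssim N^{-s}$; so the real work is to control $e^m$. Writing $U^n=\Pi_N u(t_n)$ and using that $\Pi_N$ commutes with $\Delta,\Delta^2$ on the torus, I would insert $U^n$ into \eqref{semi_e1} to produce the error equation
\[
\frac{e^{n+1}-e^n}{\tau}=-\nu\Delta^2 e^{n+1}+A\Delta(e^{n+1}-e^n)+\Delta\Pi_N\bigl(f(u(t_n))-f(u^n)\bigr)+T^{n+1},
\]
with $e^0=0$ and truncation $T^{n+1}=T_1+T_2+T_3$: $T_1$ the backward-Euler consistency of the linear term, $T_2$ the explicit-in-time consistency of the nonlinearity, and $T_3=-A\Delta(U^{n+1}-U^n)$ the residual of the artificial stabilization.

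Because \eqref{1} is an $H^{-1}$ gradient flow and all iterates have mean zero, the natural energy method is to pair the error equation with $(-\Delta)^{-1}e^{n+1}$ in $L^2$. Using the identity $(a-b,a)=\tfrac12(\|a\|^2-\|b\|^2+\|a-b\|^2)$ in both the $\dot H^{-1}$ and the $L^2$ inner products, I would obtain per step a telescoping relation controlling $\tfrac12\|e^{n+1}\|_{\dot H^{-1}}^2$ and $\tfrac{A}{2}\|e^{n+1}\|_2^2$, a genuinely dissipative term $\nu\|\nabla e^{n+1}\|_2^2$, and the nonnegative stabilization remainders $\tfrac{A}{2}\|e^{n+1}-e^n\|_2^2$. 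Self-adjointness of $\Pi_N$ together with $\Delta(-\Delta)^{-1}=-I$ turns the nonlinear term into $(f(u(t_n))-f(u^n),e^{n+1})$. Summing $n=0,\dots,m-1$ and abbreviating the Gronwall quantity $G_n=\tfrac12\|e^n\|_{\dot H^{-1}}^2+\tfrac{A}{2}\|e^n\|_2^2$, everything reduces to bounding the accumulated nonlinear and truncation contributions by $C\tau\sum G_n$ plus data.

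The crux---and the place where the Lipschitz hypothesis is eliminated---is the cubic term. I would write $f(u(t_n))-f(u^n)=(\rho^n+e^n)Q^n-(\rho^n+e^n)$ with $Q^n=u(t_n)^2+u(t_n)u^n+(u^n)^2$, and estimate by $\dot H^{-1}$--$\dot H^1$ duality, $|(f(u(t_n))-f(u^n),e^{n+1})|\le\|f(u(t_n))-f(u^n)\|_{\dot H^{-1}}\|\nabla e^{n+1}\|_2$, absorbing $\|\nabla e^{n+1}\|_2$ into the dissipation. The linear part contributes $\|\rho^n+e^n\|_{\dot H^{-1}}$, which is directly $\le\|\rho^n\|_{\dot H^{-1}}+\sqrt{2G_n}$ and hence harmless. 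For the cubic part I would use $\|(\rho^n+e^n)Q^n\|_{\dot H^{-1}}\lesssim\|\rho^n+e^n\|_2\,\|Q^n\|_{L^3}$ via Hölder and the $2$D embedding $\dot H^1\hookrightarrow L^6$, and here I do \emph{not} need any $L^\infty$ bound on $u^n$: Theorem \ref{thm_es_1} furnishes the uniform energy bound $E(u^n)\le E_0$, hence $\|u^n\|_{H^1}\lesssim\nu^{-1/2}$ and $\|u^n\|_{L^p}\lesssim\sqrt{p}\,\nu^{-1/2}$, which bounds $\|Q^n\|_{L^3}$ by a constant depending only on $(E_0,\nu)$. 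The residual $\|e^n\|_2$ so produced is not the $\dot H^{-1}$ Gronwall norm, but it is exactly what the stabilization controls: $\|e^n\|_2^2\le 2G_n/A$, so for $A$ large the cubic contributes $\le\tfrac{C}{A}G_n+O(N^{-2s})$, compatible with Gronwall. I expect this interplay---trading the missing $L^\infty$ bound on $u^n$ for the energy-stability $L^p$ bounds, and paying for it with the large coefficient $A$---to be the main obstacle and the technical heart of the argument.

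It remains to bound the truncation and run the discrete Gronwall inequality. By the uniform $H^s$-regularity of the exact solution, $T_1$ and $T_2$ are $O(\tau)$ once measured in a sufficiently negative Sobolev norm---dictated by the available regularity, so that only the derivatives of $\partial_t u$ admissible under $s\ge4$ are needed---while $T_3=-A\Delta(U^{n+1}-U^n)$ satisfies $\|T_3\|_{\dot H^{-1}}=A\|\nabla(U^{n+1}-U^n)\|_2\le A\tau\sup_t\|\nabla\partial_t U\|_2$, which is where the prefactor $A$ in the final estimate originates. Feeding these bounds into the discrete Gronwall lemma for $G_n$ and using $e^0=0$ yields $G_m\le A^2 e^{C_1 t_m}C_2^2(N^{-s}+\tau)^2$; since $\tfrac{A}{2}\|e^m\|_2^2\le G_m$ and $A\ge1$, taking square roots gives $\|e^m\|_2\le A\,e^{C_1 t_m}C_2(N^{-s}+\tau)$. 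Adding back $\|\rho^m\|_2\lesssim N^{-s}$ produces the claimed $L^2$ estimate, with $C_1$ depending on $(u_0,\nu)$ through the embedding constants and $C_2$ additionally on $s$.
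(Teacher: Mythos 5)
Your setup---splitting off the spectral projection error, deriving the error equation with the consistency terms $T_1,T_2,T_3$, and measuring the truncation in negative Sobolev norms so that the $H^s$, $s\ge 4$ regularity suffices---is sound, and your idea of handling the cubic term through the energy-stability $H^1\hookrightarrow L^p$ bounds on $u^n$ is a legitimate alternative to the paper's use of the $L^\infty$ bound on the exact solution (Proposition \ref{bc1}). But there is a genuine gap at the heart of the argument: the test function $(-\Delta)^{-1}e^{n+1}$ cannot deliver the $L^2$ estimate with the stated rate. Pairing $A\Delta(e^{n+1}-e^n)$ with $(-\Delta)^{-1}e^{n+1}$ produces $-\tfrac A2\bigl(\|e^{n+1}\|_2^2-\|e^n\|_2^2+\|e^{n+1}-e^n\|_2^2\bigr)$ \emph{without} a factor $1/\tau$, whereas the time-difference term produces $\tfrac1{2\tau}\bigl(\|e^{n+1}\|_{\dot H^{-1}}^2-\cdots\bigr)$. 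Hence the quantity that actually telescopes in the discrete Gronwall is
\begin{align*}
y_n=\tfrac12\|e^n\|_{\dot H^{-1}}^2+\tfrac{A\tau}{2}\|e^n\|_2^2,
\end{align*}
not your $G_n=\tfrac12\|e^n\|_{\dot H^{-1}}^2+\tfrac A2\|e^n\|_2^2$; compare the paper's $y_n=\|e^n\|_2^2+A\tau\|\nabla e^n\|_2^2$ in Proposition \ref{propy2}, where the stabilization part likewise carries the weight $A\tau$. This miscount breaks the proof in two places. First, the final extraction: from $y_m$ you only get $\|e^m\|_2^2\le 2y_m/(A\tau)$, i.e.\ $\|e^m\|_2\lesssim (A\tau)^{-1/2}\cdot A\,(N^{-s}+\tau)$, which degenerates as $\tau\to0$ and is not the claimed bound. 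Second, the closure of the Gronwall for the cubic term: your key step $\|e^n\|_2^2\le 2G_n/A$ becomes $\|e^n\|_2^2\le 2y_n/(A\tau)$, so the coefficient $\tilde\alpha_n$ acquires a factor $1/\tau$ and the exponential $\exp(\tau\sum_n\tilde\alpha_n)$ blows up as $\tau\to 0$. An inverse inequality $\|e^m\|_2\le N\|e^m\|_{\dot H^{-1}}$ would salvage an $L^2$ bound, but at the cost of a full power of $N$, ruining the rate.

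The repair is to test with $e^{n+1}$ itself, as the paper does in Proposition \ref{propy2}: then $\|e^n\|_2^2$ is the weight-one telescoping quantity, the dissipation is $\nu\|\Delta e^{n+1}\|_2^2$, and the nonlinear term $\bigl(f(u(t_n))-f(u^n),\Delta \Pi_N e^{n+1}\bigr)$ is absorbed into that dissipation after estimating $f(u(t_n))-f(u^n)$ in $L^2$ --- using the $L^\infty$ bound on the exact solution from Proposition \ref{bc1} for the terms linear and quadratic in $e^n$, and Gagliardo--Nirenberg ($\|e^n\|_4^4\lesssim\|e^n\|_2^2\|\nabla e^n\|_2^2$, $\|e^n\|_6^6\lesssim\|e^n\|_2^2\|\nabla e^n\|_2^4$) together with the uniform $H^1$ bounds coming from energy stability for the higher powers of $e^n$. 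No $L^\infty$ bound on the numerical solution is needed in either route.
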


For the MBE equation, we have the following $H^1$ error estimate. Note that due to the use of $H^1$
space the error bound below involves $N^{-(s-1)}$ instead of $N^{-s}$.

\begin{thm}[$H^1$ error estimate for MBE] \label{thm_MBE_L2}
Let $\nu>0$ and $h_0 \in H^s$, $s\ge 5$ with mean zero. Let $h(t)$ be the solution to the MBE equation with
initial data $h_0$. Let $h^n$ be defined according to \eqref{semi_e2} with initial data $\Pi_N h_0$.
Assume $A$ satisfies the same condition as in  Theorem \ref{thm_es_2}.
Define $t_m=m \tau$, $m\ge 1$. Then
\begin{align*}
\| \nabla (h(t_m) -h^m)\|_2 \le A \cdot e^{C_1 t_m} \cdot C_2 \cdot (N^{-(s-1)} +\tau),
\end{align*}
where $C_1>0$ depends on $(h_0,\nu)$, $C_2>0$ depends on $(\nu,h_0,s)$.
\end{thm}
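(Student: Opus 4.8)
The plan is to run a projection-plus-energy argument, but to track the stabilization coefficient $A$ honestly through the consistency error, since it is $A$ that appears as the prefactor on the right-hand side. First I would split the error as $h(t_n)-h^n = \rho^n + \theta^n$, where $\rho^n = h(t_n)-\Pi_N h(t_n)$ is the spectral projection error and $\theta^n = \Pi_N h(t_n)-h^n \in X_N$ is the discrete error. The regularity theory of \cite{LQT14} guarantees that the exact solution stays in $H^s$ on $[0,t_m]$ with norms controlled by $(\nu,h_0,s)$; standard spectral estimates then give $\|\nabla\rho^n\|_2 \lesssim C_2 N^{-(s-1)}$ (and analogous bounds for the higher norms needed below), which accounts for the $N^{-(s-1)}$ rather than $N^{-s}$ because the error is measured in $H^1$. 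In parallel, Theorem \ref{thm_es_2} provides $E(h^n)\le E_0$ for all $n$, hence uniform control of $\|\Delta h^n\|_2$ and $\|\nabla h^n\|_{L^4}$, which will feed the nonlinear estimates.

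Next I would derive the equation for $\theta^n$ by subtracting the scheme \eqref{semi_e2} from the exact MBE equation sampled at $t_{n+1}$ and projected by $\Pi_N$, obtaining
\begin{align*}
d_t\theta^{n+1} + \nu\Delta^2\theta^{n+1} - A\Delta(\theta^{n+1}-\theta^n)
= \Pi_N\nabla\cdot\bigl(g(\nabla h^n)-g(\nabla h(t_n))\bigr) + T^{n+1},
\end{align*}
where $T^{n+1}$ collects all consistency errors. I would then decompose $T^{n+1}$ into (i) the time-discretization error $\tfrac{h(t_{n+1})-h(t_n)}{\tau}-\partial_t h(t_{n+1})=O(\tau)$, (ii) the nonlinear lag $\nabla\cdot(g(\nabla h(t_{n+1}))-g(\nabla h(t_n)))=O(\tau)$, (iii) the stabilization applied to the exact solution, $-A\Delta(h(t_{n+1})-h(t_n))=O(A\tau)$, and (iv) projection commutators $(\Pi_N-I)(\cdots)$ of size $O(N^{-(s-1)})$. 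Term (iii) is precisely the source of the $A$ prefactor, so the upshot is a bound of the form $\|T^{n+1}\|_{\dot H^{-1}}\lesssim A\,C_2(\tau+N^{-(s-1)})$.

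The heart of the argument is the energy estimate. I would pair the $\theta$-equation with $-\Delta\theta^{n+1}$: the time-difference term yields $\tfrac{1}{2\tau}(\|\nabla\theta^{n+1}\|_2^2-\|\nabla\theta^n\|_2^2)$ plus a nonnegative remainder, the biharmonic term produces the dissipation $\nu\|\nabla\Delta\theta^{n+1}\|_2^2$, and the stabilization term, via the identity $2(a-b,a)=|a|^2-|b|^2+|a-b|^2$, contributes a telescoping $A$-weighted $\|\Delta\theta\|_2^2$ difference together with a good negative term. After integrating by parts the nonlinear term becomes $(g(\nabla h^n)-g(\nabla h(t_n)),\nabla\Delta\theta^{n+1})$; writing $g(a)-g(b)=|a|^2(a-b)+((a+b)\cdot(a-b))b-(a-b)$ with $a=\nabla h^n$, $b=\nabla h(t_n)$ and $a-b=\nabla(h^n-h(t_n))=-(\nabla\rho^n+\nabla\theta^n)$, I would absorb the top-order factor $\nabla\Delta\theta^{n+1}$ into the dissipation by Young's inequality, leaving lower-order contributions bounded by $C(\|\nabla\theta^n\|_2^2+\|\nabla\rho^n\|_2^2)$. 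The pairing $(T^{n+1},-\Delta\theta^{n+1})$ is likewise handled by Young, producing $C\|T^{n+1}\|_{\dot H^{-1}}^2\lesssim A^2C_2^2(\tau+N^{-(s-1)})^2$.

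Summing in $n$ to telescope the $\|\nabla\theta\|_2^2$ and $A\|\Delta\theta\|_2^2$ terms, and then applying the discrete Gronwall inequality, would give $\|\nabla\theta^m\|_2\le e^{C_1 t_m}C_2 A(\tau+N^{-(s-1)})$; combining with the projection bound on $\nabla\rho^m$ through the triangle inequality yields the stated estimate. I expect the main obstacle to be the quadratic term $|\nabla h^n|^2\nabla\theta^n$ in the nonlinearity: since we work on $\mathbb T^2$ and $\nabla h^n$ lies only in $H^1$, which does not embed in $L^\infty$, controlling this factor requires either a Brezis--Gallouet/logarithmic-Sobolev interpolation (mirroring the $|\log\nu|$ appearing in Theorem \ref{thm_es_2}) or an $L^6$-interpolation that trades part of the dissipation $\nu\|\nabla\Delta\theta^{n+1}\|_2^2$; getting this absorption to close while keeping all constants of the advertised form is the delicate step.
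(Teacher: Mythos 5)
Your proposal follows essentially the same route as the paper: write the exact solution in time-discretized form, identify a consistency forcing $\tilde G^n$ containing the $O(A\tau)$ stabilization residual $-A\int_{t_n}^{t_{n+1}}\partial_t h\,dt$ plus the spectral truncation $\nabla\cdot\Pi_{>N}(g(\nabla h(t_n)))$, test the error equation with $-\Delta e^{n+1}$, and close with a discrete Gronwall inequality. The paper skips your $\rho^n+\theta^n$ splitting and instead compares the two evolutions directly through a general ``near-solutions'' stability proposition (Proposition \ref{propy2_MBE}), absorbing the truncation error into the forcing; this is a cosmetic difference. The one substantive point concerns the difficulty you flag at the end, and the paper's resolution is simpler than either of your suggested fixes: it expands the nonlinear difference around the \emph{exact} solution, $g(\nabla q^n)-g(\nabla\tilde q^n)=O(\partial e^n)+O((\partial\tilde q^n)^2\partial e^n)+O(\partial\tilde q^n(\partial e^n)^2)+O((\partial e^n)^3)$, so that the only $L^\infty$ weight falls on $\nabla h(t_n)$ (bounded via the gradient estimates of \cite{LQT14}), while the quadratic and cubic powers of $\partial e^n$ are tamed by Gagliardo--Nirenberg, $\|\nabla e^n\|_4^2\lesssim\|\nabla e^n\|_2\|\Delta e^n\|_2$ and $\|\nabla e^n\|_6^3\lesssim\|\nabla e^n\|_2\|\Delta e^n\|_2^2$, using the \emph{uniform} $\|\Delta\cdot\|_2$ bounds on both solutions (energy stability for $h^n$, regularity for $h$). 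In your arrangement the factor $|\nabla h^n|^2$ sits on the linear term, which forces the logarithmic or dissipation-trading acrobatics you describe and degrades the Gronwall structure; simply commuting which gradient carries the square removes the obstacle and everything becomes linear in $\|\nabla e^n\|_2^2$. Your consistency analysis is also slightly underspecified: since the error is tested against $-\Delta e^{n+1}$, the residual must be controlled in $\dot H^1$ (equivalently $\Delta\tilde G^n$ in $\dot H^{-1}$), which is where the bounds $\int_0^T\|\partial_t\nabla\Delta h\|_2^2\,dt\lesssim 1+T$ and the hypothesis $s\ge5$ enter.
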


\begin{rem}
On the one hand, the parameter $A$ in the added second order damping term has to be taken
sufficiently large to guarantee stability as was shown in Theorem \ref{thm_es_1} and Theorem \ref{thm_es_2}.
On the other hand, from the above error analysis, it is evident that the introduced damping term slows
down the error convergence rate which now depends linearly on the parameter $A$. In numerical practice the value of $A$
needs to be chosen judiciously so as to achieve relatively fast convergence while not losing stability. In yet other words
there exists a delicate ``balance" between stability and convergence.
\end{rem}

We end this section by introducing some notation and preliminaries used in this paper.

We shall use  $X+$ to denote $X+\epsilon$ for arbitrarily small $\epsilon>0$. Similarly we can
define $X-$. We denote by $\mathbb T^d= \mathbb R^d / 2\pi \mathbb Z^d$ the $2\pi$-periodic torus.

 Let $\Omega=\mathbb T^d$. For any function $f:\; \Omega\to
\mathbb R$, we use $\|f\|_{L^p}=\|f\|_{L^p(\Omega)}$ or sometimes $\|f\|_p$ to denote
the  usual Lebesgue $L^p$ norm  for $1 \le p \le
\infty$. If $f=f(x,y): \, \Omega_1\times\Omega_2 \to \mathbb R$, we shall denote by
$\|f\|_{L_x^{p_1} L_y^{p_2}}$ the mixed norm:
\begin{align*}
\|f\|_{L_x^{p_1} L_y^{p_2}} = \Bigl\|  \|f(x, y)\|_{L_y^{p_2}(\Omega_2)} \Bigr\|_{L_x^{p_1}(\Omega_1)}.
\end{align*}
In a similar way one can define other mixed norms such as $\|f \|_{C_t^0 H_x^m}$ etc.

   For any two quantities $X$ and $Y$, we denote $X \lesssim Y$ if
$X \le C Y$ for some constant $C>0$. Similarly $X \gtrsim Y$ if $X
\ge CY$ for some $C>0$. We denote $X \sim Y$ if $X\lesssim Y$ and $Y
\lesssim X$. The dependence of the constant $C$ on
other parameters or constants is usually clear from the context and
we will often suppress  this dependence. We denote $X \lesssim_{Z_1,\cdots,Z_m} Y$ if
$X\le C Y$, where the constant $C$ depends on the parameters $Z_1,\cdots,Z_m$.

We use the following convention for Fourier expansion on $\Omega=\mathbb T^d$:
\begin{align*}
 &(\mathcal F f)(k)=\hat f (k) = \int_{\Omega} f(x) e^{-i x\cdot k} dx,  \quad
 f(x) = \frac 1 {(2\pi)^d} \sum_{k \in \mathbb Z^d} \hat f(k) e^{ik\cdot x}.
\end{align*}

For $f:\, \mathbb T^d\to \mathbb R$ and $s\ge 0$, we define the $H^s$-norm and $\dot H^s$-norm of $f$ as
\begin{align} \label{def_Hdots}
&\| f \|_{H^s}= (2\pi)^{-\frac d2} \Bigl( \sum_{k\in \mathbb Z^d} (1+|k|^{2s}) |\hat f(k) |^2 \Bigr)^{\frac 12}, \quad
\| f \|_{\dot H^s}= (2\pi)^{-\frac d2} \Bigl( \sum_{k\in \mathbb Z^d} |k|^{2s} |\hat f(k) |^2 \Bigr)^{\frac 12},
\end{align}
provided of course the above sums are finite. Note that for $s=1$
\begin{align*}
\|f\|_{\dot H^1} = \| \nabla f \|_2.
\end{align*}

If $f$ has mean zero, then $\hat f(0) =0$ and in this
case
\begin{align*}
\|f \|_{H^s} \sim
\Bigl( \sum_{k\in \mathbb Z^d} |k|^{2s} |\hat f(k) |^2 \Bigr)^{\frac 12}.
\end{align*}
For $f$ with mean zero, one can also define its $\dot H^s$-norm for $s<0$ via
\begin{align*}
\|f \|_{\dot H^s} = (2\pi)^{-\frac d2} \Bigl(\sum_{0 \ne k\in \mathbb Z^d} |k|^{2s} |\hat f (k)|^2 \Bigr)^{\frac 12}
\end{align*}
provided the sum converges.

For mean zero functions, we can define the fractional Laplacian $|\nabla|^s$, $s \in \mathbb R$ via
the relation
\begin{align} \label{def_fractional}
\widehat{|\nabla|^s f} (k) =|k|^s \hat f(k), \qquad 0\ne k\in \mathbb Z^d.
\end{align}
The mean zero condition is only needed for $s<0$. Note that in accordance with the usual notation
we have $|\nabla|^s = (-\Delta)^{s/2}$. For any $s\in \mathbb R$,  we will use the notation $\langle \nabla \rangle^s
=(1-\Delta)^{s/2}$ which corresponds to the multiplier $(1+|k|^2)^{s/2}$ on the Fourier side.

We shall use the following simple interpolation inequality.
\begin{lem} \label{lem_simple_interp1}
For any $f\in \dot H^{-1}(\mathbb T^d) \cap \dot H^1(\mathbb T^d)$, we
have
\begin{align} \notag
\|f \|_2\le \| |\nabla|^{-1} f\|_2^{\frac 12} \| \nabla f\|_2^{\frac 12}.
\end{align}
Similarly for any $f \in L^2(\mathbb T^d) \cap \dot H^2 (\mathbb T^d)$, we have
\begin{align} \notag
\| \nabla f \|_2 \le \| f \|_2^{\frac 12} \| \Delta f \|_2^{\frac 12}.
\end{align}
\end{lem}
\begin{proof}
For the first inequality, note that $f$ has mean zero by assumption. Then by Plancherel we can write
\begin{align*}
\int f^2 dx = \int |\nabla| f \cdot |\nabla|^{-1} f dx.
\end{align*}
The result then follows from the Cauchy-Schwartz inequality. Note that $\| |\nabla| f\|_2 = \| \nabla f \|_2$.
The proof of the second inequality is even easier since
\begin{align*}
\int \nabla f \cdot \nabla f dx = -\int f \Delta f dx.
\end{align*}
\end{proof}

Occasionally we will need to use the Littlewood--Paley frequency projection
operators. To fix the notation, let $\phi_0 \in
C_c^\infty(\mathbb{R}^d )$ and satisfy
\begin{equation}\nonumber
0 \leq \phi_0 \leq 1,\quad \phi_0(\xi) = 1\ {\text{ for}}\ |\xi| \leq
1,\quad \phi_0(\xi) = 0\ {\text{ for}}\ |\xi| \geq 2.
\end{equation}
Let $\phi(\xi):= \phi_0(\xi) - \phi_0(2\xi)$ which is supported in $1/2\le |\xi| \le 2$.
For any $f \in \mathcal S^{\prime}(\mathbb R^d)$, $j \in \mathbb Z$, define
\begin{align*}
 &\widehat{\Delta_j f} (\xi) = \phi(2^{-j} \xi) \hat f(\xi), \quad
 \widehat{S_j f} (\xi) = \phi_0(2^{-j} \xi) \hat f(\xi), \qquad \xi \in \mathbb R^d.
\end{align*}

 Let $f:\, \mathbb T^d \to \mathbb R$ be
a smooth function. Note that $f$ can be regarded as a tempered distribution on $\mathbb R^d$ for which $\Delta_j f$
can be defined as above. For any $1\le p\le q\le \infty$, we recall
the following Bernstein inequalities (see \cite{LQT14} for a standard proof)
\begin{align}
&\| |\nabla|^s \Delta_j f \|_{L^p(\mathbb T^d)} \sim 2^{js}  \| \Delta_j f \|_{L^p(\mathbb T^d)}, \qquad s \in \mathbb R;
\label{b_per_e1}\\
& \|\Delta_j f \|_{L^q(\mathbb T^d)} \lesssim 2^{jd(\frac 1p-\frac 1q)} \|f\|_{L^p(\mathbb T^d)},
\qquad\, j \in \mathbb Z;
\label{b_per_e2}\\
& \|  S_j f \|_{L^q(\mathbb T^d)}
 \lesssim 2^{j d( \frac 1p - \frac 1 q)} \| f \|_{L^p(\mathbb T^d)}, \qquad j\ge -2. \label{b_per_e3}
\end{align}

In later sections, we will use (sometimes without explicit mentioning)
 the following interpolation inequality on $\mathbb T^2$: for $s>1$ and
 any $f\in H^s(\mathbb T^2)$ with mean zero, we have
 \begin{align} \label{log212}
 \| f\|_{L^{\infty}(\mathbb T^2)} \le 1+ C_s \|f\|_{\dot H^1(\mathbb T^2)} \log(3+\|f\|_{H^s(\mathbb T^2)}),
 \end{align}
 where $C_s>0$ is a constant depending only on $s$.
 \begin{rem}
The constant $1$ in the above inequality can be replaced by any other positive constants (with different corresponding constant $C_s$).
The mean zero condition is certainly needed in view of the $\|f\|_{\dot H^1}$ term on the RHS. If it is replaced
by $\|f\|_{H^1}$ then the inequality holds for any $f$ not necessarily with mean zero.
 \end{rem}
 We include a proof of \eqref{log212} for the sake of completeness.
 Since $f$ has mean zero we have $\Delta_j f=0$ for $j<-2$.
Let $j_0 \in \mathbb Z$ whose value will be chosen later. By
using the Bernstein inequality, we have
\begin{align*}
\|f\|_{L^{\infty}(\mathbb T^2)}
& \lesssim \sum_{-2\le j\le j_0} 2^j \| \Delta_j f\|_{L^2(\mathbb T^2)}
+ \sum_{j>j_0} 2^j 2^{-js} \| f\|_{H^s(\mathbb T^2)}, \notag \\
& \lesssim_s (j_0+3) \|f\|_{\dot H^1} + 2^{-j_0(s-1)} \| f\|_{H^s}.
\end{align*}
Choosing $j_0 = \operatorname{const} \cdot \log (3+\|f\|_{H^s})$ then yields \eqref{log212}.

We will need to use the usual  Sobolev embedding on $\mathbb T^d$. We include the precise
statement and also a proof here for the sake of completeness.
\begin{lem}[Sobolev embedding] \label{lem_sobo}
Let $d\ge 1$ and $0<s<d$. Then for any $\infty>p>\frac d{d-s}$, we have
\begin{align*}
\| \langle \nabla \rangle^{-s} f \|_{L^p(\mathbb T^d)} \lesssim_{s,p,d} \| f\|_{L^q(\mathbb T^d)}, \quad
\text{where }\frac 1 q=\frac 1p +\frac sd.
\end{align*}
\end{lem}
\begin{proof}
We shall write $\lesssim_{s,p,d}$ as $\lesssim$.
First note that the average of $f$ on $\mathbb T^d$ is easily bounded by $\|f \|_q$.  Thus we can assume
that $f$ has mean zero, this would imply $\Delta_j f =0$ for $j<-2$.
For convenience we may also assume $\|f \|_q=1$.
  Now let $j_0 $ be an integer whose value
will be chosen later.  For the low frequency piece we have
 \begin{align*}
|(\langle \nabla \rangle^{-s} S_{j_0} f) |(x)
& \lesssim \sum_{j=-2}^{j_0} 2^{-js} 2^{jd/q}
\| \Delta_j f \|_{q} \notag \\
& = \sum_{j=-2}^{j_0} 2^{-js} 2^{jd(\frac 1 p+\frac sd)}
\| \Delta_j f \|_{q} \notag \\
& \lesssim 2^{j_0 \frac d p} \| f\|_{q} = 2^{j_0 \frac dp}.
\end{align*}
For the high frequency piece, we have
\begin{align*}
\sum_{j>j_0} | (\langle \nabla \rangle^{-s}  \Delta_j f )(x)| \lesssim 2^{-j_0s}  (\mathcal M f)(x),
\end{align*}
where $\mathcal Mf$ is the maximal function (adapted to the periodic case, one can restrict to balls of size less than
$2\pi$ centered at the point $x$).   If $(\mathcal M f)(x) \lesssim 1$, we choose
$j_0=1$. If $(\mathcal Mf )(x) \gg 1$, then we choose $j_0$ such that
\begin{align*}
2^{j_0 (s+\frac dp)} \sim \mathcal Mf(x).
\end{align*}
Thus
\begin{align*}
| \langle \nabla \rangle^{-s} f |(x) \lesssim  (1+\mathcal Mf (x) )^{\frac  {d/p} { d/p+s}} \lesssim 1+ (\mathcal M f(x) )^{\frac q p}.
\end{align*}
This in turn implies the desired inequality.
\end{proof}

\section{Proof of Stability results}
\setcounter{equation}{0}

In this section, we will provide rigorous proofs for the stability results, i.e., Theorems \ref{thm_es_1} and
\ref{thm_es_2}.
\subsection{Proof of Theorem \ref{thm_es_1}}
Rewrite \eqref{semi_e1} as
\begin{align}
u^{n+1} = \frac{1-A \tau \Delta} {1+\nu \tau \Delta^2-A\tau \Delta} u^n
+ \frac{\tau \Delta \qn}{1+\nu \tau \Delta^2 -A\tau \Delta}  f(u^n).
\end{align}

\begin{lem} \label{z1}
There is an absolute constant $c_1>0$ such that for any $n\ge 0$,
\begin{align}
& \| u^{n+1} \|_{H^{\frac 32}(\mathbb T^2)} \le c_1 \cdot \Bigl(\frac{A+1}{\nu}+\frac 1 {A\tau} \Bigr) \cdot (E_n+1), \\
& \| u^{n+1} \|_{\dot H^1(\mathbb T^2)}
\le \Bigl(1+ \frac {1} A + \frac{3}A \|u^n\|_{\infty}^2 \Bigr)\cdot \|u^n\|_{\dot H^1(\mathbb T^2)},
\end{align}
where $E_n=E(u^n)$.
\end{lem}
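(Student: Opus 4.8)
The plan is to derive both bounds directly from the frequency-side solution formula
\begin{align*}
u^{n+1} = \frac{1-A \tau \Delta}{1+\nu \tau \Delta^2-A\tau \Delta} u^n
+ \frac{\tau \Delta \qn}{1+\nu \tau \Delta^2 -A\tau \Delta} f(u^n),
\end{align*}
by examining the Fourier multipliers acting on each term. On the $k$-th mode, $-\Delta \mapsto |k|^2$ and $\Delta^2 \mapsto |k|^4$, so the two multipliers become
\begin{align*}
m_1(k) = \frac{1+A\tau |k|^2}{1+\nu \tau |k|^4+A\tau |k|^2}, \qquad
m_2(k) = \frac{-\tau |k|^2}{1+\nu \tau |k|^4+A\tau |k|^2}.
\end{align*}
The denominator $D(k)=1+\nu\tau|k|^4+A\tau|k|^2$ is positive for all $k$, so both multipliers are well-defined and bounded.

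\textbf{The $H^{3/2}$ bound.} For the first estimate I would bound $\||\nabla|^{3/2} u^{n+1}\|_2$ mode by mode. Multiplying by $|k|^{3/2}$, I need to control $|k|^{3/2} m_1(k)$ and $|k|^{3/2} m_2(k)$. For $m_1$, since $1+A\tau|k|^2 \le D(k)$, we have $m_1(k)\le 1$ and also $m_1(k) \le (1+A\tau|k|^2)/(\nu\tau|k|^4)$, which after inserting the $|k|^{3/2}$ weight and optimizing gives a prefactor of order $(A+1)/\nu$ times the $\dot H^{1/2}$-type content of $u^n$; since $\dot H^{1/2}\hookrightarrow$ (controlled by) $\dot H^1$ via the energy, and $\|u^n\|_{\dot H^1}^2\le 2\nu^{-1}E_n$, this contributes the $\frac{A+1}{\nu}(E_n+1)$ piece. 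For the nonlinear term, I would use $|k|^{3/2}|m_2(k)| \lesssim |k|^{3/2}\cdot \tau|k|^2/D(k)$ and note $\tau|k|^2/D(k)\le 1/(A)$ after extracting $A\tau|k|^2$ from the denominator, with a residual factor controlled by $1/(A\tau)$ in the worst low-frequency regime; pairing this against $\|\qn f(u^n)\|_2 \lesssim \|f(u^n)\|_2 \lesssim \|u^n\|_6^3+\|u^n\|_2 \lesssim (E_n+1)$ (using $F(u)=\frac14(u^2-1)^2$ so $E_n$ controls $\|u^n\|_4^4$, hence $\|u^n\|_6$ by interpolation with $\dot H^1$) yields the $\frac1{A\tau}(E_n+1)$ contribution. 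Collecting the two terms gives the stated bound with an absolute constant $c_1$.

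\textbf{The $\dot H^1$ bound.} For the second estimate the key observation is that $m_1(k)\le 1$ uniformly, so the linear term contributes at most $\|u^n\|_{\dot H^1}$ — this is the leading ``$1\cdot$'' factor. For the nonlinear term I would again use $|m_2(k)|\cdot|k| \le |k|\cdot\tau|k|^2/D(k)$, and here the crucial bound is $\tau|k|^2/D(k)\le \tau|k|^2/(A\tau|k|^2)=1/A$, giving $\||\nabla| m_2 \cdot \qn f(u^n)\|_2 \le \frac1A \|f(u^n)\|_{\dot H^1}$ after one more extraction of the $|k|$ weight against the projection. Then I expand $\nabla f(u^n)=f'(u^n)\nabla u^n=(3(u^n)^2-1)\nabla u^n$, so $\|f(u^n)\|_{\dot H^1}=\|\nabla f(u^n)\|_2 \le (3\|u^n\|_\infty^2+1)\|u^n\|_{\dot H^1}$, which assembles exactly into $\bigl(1+\tfrac1A+\tfrac3A\|u^n\|_\infty^2\bigr)\|u^n\|_{\dot H^1}$.

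\textbf{Main obstacle.} The delicate point throughout is the uniform-in-$\tau$ handling of the multipliers: one must verify that the bounds $\tau|k|^2/D(k)\le 1/A$ and the weighted estimates on $m_1$ hold across \emph{all} frequency ranges and \emph{all} $\tau>0$ simultaneously, since the whole point is $\tau$-independence. I expect the genuinely careful step to be the $H^{3/2}$ estimate, where the $|k|^{3/2}$ weight interpolates awkwardly between the regimes where the $1$, the $A\tau|k|^2$, and the $\nu\tau|k|^4$ terms dominate the denominator; one must split into these frequency regimes and optimize to extract the clean prefactor $\frac{A+1}{\nu}+\frac1{A\tau}$ rather than something larger. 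By contrast the $\dot H^1$ bound is more robust because $m_1\le 1$ and the $1/A$ gain on $m_2$ are both immediate.
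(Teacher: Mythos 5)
Your overall strategy --- reading both estimates off the Fourier multipliers in the solution formula --- is exactly the paper's, and your $\dot H^1$ argument is correct and essentially identical to theirs: $m_1\le 1$ for the linear part, $\tau|k|^2/D(k)\le 1/A$ for the nonlinear part, then $\|\nabla f(u^n)\|_2\le(1+3\|u^n\|_\infty^2)\|\nabla u^n\|_2$.

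The $H^{3/2}$ estimate, however, has a genuine quantitative gap in the nonlinear term. You pair the multiplier bound $\sup_k |k|^{3/2}\tau|k|^2/D(k)\le \nu^{-1}$ against $\|f(u^n)\|_2$ and claim $\|u^n\|_6^3+\|u^n\|_2\lesssim E_n+1$. But the energy only controls $\|u^n\|_4^4$ and $\nu\|\nabla u^n\|_2^2$ with absolute constants; reaching $L^6$ in two dimensions forces you through the gradient, e.g.\ $\|u^n\|_6^3\lesssim \|u^n\|_4^2\|\nabla u^n\|_2+\|u^n\|_4^3\lesssim \nu^{-1/2}(E_n+1)$. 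The resulting bound $\nu^{-3/2}(E_n+1)$ is \emph{not} of the form $c_1\bigl(\tfrac{A+1}{\nu}+\tfrac1{A\tau}\bigr)(E_n+1)$ with an absolute constant $c_1$, since the lemma is stated for arbitrary $A$ (it would only be absorbed after invoking $A\gtrsim\nu^{-1}$, which the lemma does not assume). The paper avoids this by not discarding the extra decay: since $D(k)\ge\nu\tau|k|^4$, the nonlinear multiplier is bounded by $\nu^{-1}|k|^{-1/2}$, so the term is controlled by $\nu^{-1}\||\nabla|^{-1/2}(f(u^n)-\overline{f(u^n)})\|_2\lesssim \nu^{-1}\|(u^n)^3-u^n-\overline{(u^n)^3}\|_{4/3}$ via the dual Sobolev embedding $L^{4/3}(\mathbb T^2)\hookrightarrow\dot H^{-1/2}(\mathbb T^2)$, and $\|u^n\|_4^3\lesssim(E_n+1)^{3/4}$ closes the estimate with an absolute constant. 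Note two further points you would need to handle on this route: $f(u^n)$ does not have mean zero, so the mean must be subtracted before applying $|\nabla|^{-1/2}$; and for the linear term it is cleaner to keep the full weight $|k|^{3/2}(1+A\tau|k|^2)/D(k)\le \tfrac1{A\tau}+\tfrac{A}{\nu}$ and pair it against $\|u^n\|_2\lesssim E_n+1$, rather than against an $\dot H^{1/2}$ norm, which again costs an unnecessary power of $\nu^{-1}$ through interpolation with the gradient.
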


\begin{proof}
In this proof for any two quantities $X$ and $Y$,  we shall use the notation $X\lesssim Y$ to
denote $X\le CY$ where $C>0$ is an absolute constant. For any $s \in \mathbb R$, we denote $\langle \nabla \rangle^s = (1-\Delta)^{s/2}$ which corresponds
to the multiplier $(1+|k|^2)^{s/2}$ on the Fourier side.

First note that on the Fourier side, we have for each $ 0\ne k \in \mathbb Z^d$,
\begin{align}
& \frac {(1+ A\tau |k|^2) |k|^{\frac 32} } { 1+ \nu \tau |k|^4 + A \tau |k|^2} \lesssim \frac 1 {A\tau} + \frac A {\nu}, \notag \\
&  \frac {\tau |k|^2 \cdot |k|^{\frac 32} }{1+\nu \tau |k|^4 + A\tau |k|^2}
 \lesssim  \frac 1 {\nu} |k|^{-\frac 12}. \notag 
\end{align}

Thus
\begin{align*}
\| u^{n+1} \|_{H^{\frac 32}}
&\lesssim \Bigl(\frac A {\nu} + \frac 1{A\tau} \Bigr) \| u^n \|_2 + \frac 1 {\nu}
\| \langle \nabla \rangle^{-\frac 12} \bigl( f(u^n)  \bigr) \|_2 \notag \\
&\lesssim \Bigl(\frac A {\nu}+\frac{1}{A\tau} \Bigr) \|u^n\|_2 + \frac 1 {\nu} \| (u^n)^3-u^n   \|_{\frac 43} \notag \\
& \lesssim \Bigl(\frac {A+1}  {\nu} + \frac 1 {A\tau} \Bigr) (E_n+1).
\end{align*}
In the second inequality above we have used the Sobolev embedding $\| \langle \nabla \rangle^{-1/2} h \|_{L^2(\mathbb T^2)}
\lesssim \| h \|_{L^{4/3}(\mathbb T^2)}$ (see Lemma \ref{lem_sobo}).

For $\|u^{n+1}\|_{\dot H^1}$, we have
\begin{align*}
\| u^{n+1} \|_{\dot H^1} &\le \| u^n\|_{\dot H^1} + \frac{1}A
\| (u^n)^3 - u^n \|_{\dot H^1} \notag \\
& \le (1+\frac{1} A + \frac 3 A \|u^n\|_{\infty}^2 ) \cdot \| u^n \|_{\dot H^1}.
\end{align*}
This completes the proof of Lemma \ref{z1}.
\end{proof}

\begin{lem} \label{z2}
For any $n\ge 0$,
\begin{align}
 &E_{n+1}-E_n 
 +\left(A+\frac 12+\sqrt{\frac{2\nu}{\tau}} \right) \|u^{n+1}-u^n\|_2^2 \notag \\
 \le &\; \|u^{n+1}-u^n \|_2^2 \cdot \Bigl(  \|u^n\|_{\infty}^2 +
 \frac 12 \| u^{n+1} \|_{\infty}^2 \Bigr).
 \end{align}

\end{lem}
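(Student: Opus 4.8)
The plan is to test the scheme against the discrete chemical potential and to track the energy increment \emph{exactly}, discarding nothing until the final step. First I would rewrite \eqref{semi_e1} in conservative form $(u^{n+1}-u^n)/\tau=\Delta w^{n+1}$, where $w^{n+1}=-\nu\Delta u^{n+1}+A(u^{n+1}-u^n)+\Pi_N f(u^n)$, and pair both sides in $L^2$ with $w^{n+1}$. Since $(\Delta w^{n+1},w^{n+1})=-\|\nabla w^{n+1}\|_2^2$, and since $u^{n+1}-u^n\in X_N$ lets me drop the projection against it, integration by parts on the $-\nu\Delta u^{n+1}$ term gives the identity
\[
\nu(\nabla(u^{n+1}-u^n),\nabla u^{n+1}) + A\|u^{n+1}-u^n\|_2^2 + (f(u^n),u^{n+1}-u^n) = -\tau\|\nabla w^{n+1}\|_2^2.
\]

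Next I would convert each piece into an energy increment. The polarization identity gives $\nu(\nabla(u^{n+1}-u^n),\nabla u^{n+1})=\tfrac{\nu}{2}(\|\nabla u^{n+1}\|_2^2-\|\nabla u^n\|_2^2)+\tfrac{\nu}{2}\|\nabla(u^{n+1}-u^n)\|_2^2$, recovering the gradient part of $E_{n+1}-E_n$ plus a clean dissipation. For the nonlinear part I would use the exact quartic Taylor expansion of $F$ to write $(f(u^n),u^{n+1}-u^n)=\int(F(u^{n+1})-F(u^n))\,dx-R$, where $R$ collects the quadratic, cubic and quartic terms in $\delta:=u^{n+1}-u^n$. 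Completing the square via $\tfrac32 a^2\delta^2+a\delta^3+\tfrac14\delta^4=\tfrac12 a^2\delta^2+\tfrac14(2a+\delta)^2\delta^2$ with $a=u^n$ recasts $R$ as $\int[\tfrac12(u^n)^2+\tfrac14(u^n+u^{n+1})^2-\tfrac12]\delta^2\,dx$, which is precisely the pointwise quantity in \eqref{bdd_earlier_2}; bounding $(a+b)^2\le2a^2+2b^2$ then yields the factor $\|u^n\|_\infty^2+\tfrac12\|u^{n+1}\|_\infty^2$ and the explicit $-\tfrac12\|\delta\|_2^2$.

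Collecting everything I arrive at
\[
E_{n+1}-E_n+\Bigl(A+\tfrac12\Bigr)\|u^{n+1}-u^n\|_2^2+\tfrac{\nu}{2}\|\nabla(u^{n+1}-u^n)\|_2^2+\tau\|\nabla w^{n+1}\|_2^2 \le \|u^{n+1}-u^n\|_2^2\Bigl(\|u^n\|_\infty^2+\tfrac12\|u^{n+1}\|_\infty^2\Bigr).
\]
The last and most delicate step is to manufacture the $\sqrt{2\nu/\tau}$ coefficient, which is the one place the two retained nonnegative terms must be used rather than thrown away. I would combine them by AM--GM, $\tfrac{\nu}{2}\|\nabla(u^{n+1}-u^n)\|_2^2+\tau\|\nabla w^{n+1}\|_2^2\ge\sqrt{2\nu\tau}\,\|\nabla(u^{n+1}-u^n)\|_2\,\|\nabla w^{n+1}\|_2$, and then apply Cauchy--Schwarz with the conservative form to get $\|\nabla(u^{n+1}-u^n)\|_2\|\nabla w^{n+1}\|_2\ge|(u^{n+1}-u^n,-\Delta w^{n+1})|=\tfrac1\tau\|u^{n+1}-u^n\|_2^2$. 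This produces exactly $\sqrt{2\nu/\tau}\,\|u^{n+1}-u^n\|_2^2$, which I move to the left-hand side. The conceptual point is that keeping the dissipation $\tau\|\nabla w^{n+1}\|_2^2$ instead of discarding it is what upgrades the naive pointwise bound into one carrying the extra $\tau$-dependent margin.
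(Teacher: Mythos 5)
Your proof is correct and is essentially the paper's own argument in different clothing: the paper tests the scheme against $(-\Delta)^{-1}(u^{n+1}-u^n)$, which (since $u^{n+1}-u^n=\tau\Delta w^{n+1}$) is just $-\tau w^{n+1}$ up to an additive constant, so your identity, your Taylor-remainder bound on the nonlinearity, and your AM--GM plus Cauchy--Schwarz extraction of the $\sqrt{2\nu/\tau}$ term all coincide with the steps in the paper. The only cosmetic difference is that you organize the quartic remainder by completing the square to recover the pointwise quantity of \eqref{bdd_earlier_2}, whereas the paper writes the same remainder as $\tfrac14(u^{n+1}-u^n)^2\bigl(3(u^n)^2+(u^{n+1})^2+2u^nu^{n+1}-2\bigr)$ and bounds it by $2ab\le a^2+b^2$; both yield the identical constant.
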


\begin{proof}
In this proof we denote by $(\cdot,\cdot)$ the usual $L^2$ inner product.
Recall
\begin{align*}
\frac{u^{n+1}-u^n}{\tau} = -\nu \Delta^2 u^{n+1}
+A \Delta (u^{n+1}-u^n) +\Delta \qn f(u^n).
\end{align*}
Taking the $L^2$ inner product with $(-\Delta)^{-1}(u^{n+1}-u^n)$ on both sides and using the identity
\begin{align} \label{aba_1}
b\cdot (b-a) = \frac 12 ( |b|^2-|a|^2+|b-a|^2), \qquad \forall\, a, b\in \mathbb R^d,
\end{align}
we get
\begin{align}
  & \frac 1 {\tau} \| |\nabla|^{-1} (u^{n+1}-u^n) \|_2^2
  + \frac {\nu}2 ( \| \nabla u^{n+1} \|_2^2 -\| \nabla u^n\|_2^2
+ \| \nabla(u^{n+1}-u^n ) \|_2^2 ) \notag \\
& \qquad + A \|u^{n+1}-u^n \|_2^2 = ( \Delta \qn f(u^n), (-\Delta)^{-1} (u^{n+1}-u^n) ).
\end{align}

Since all $u^n$ have Fourier modes supported in $|k|_{\infty}\le N$, we have
\begin{align}
 (\Delta \qn f(u^n), (-\Delta)^{-1}(u^{n+1}-u^n) ) = - (f(u^n), u^{n+1}-u^n).
 \end{align}

By the Fundamental Theorem of Calculus, we have (recall $f=F^{\prime}$)
\begin{align*}
 &F(u^{n+1})- F(u^n) \notag \\
& = f(u^n) (u^{n+1}-u^n) + \int_{u^n}^{u^{n+1}} f^{\prime}(s) (u^{n+1}-s) ds
\notag \\
& = f(u^n) (u^{n+1}-u^n) + \int_{u^n}^{u^{n+1}} (3s^2-1) (u^{n+1}-s) ds \notag \\
& = f(u^n) (u^{n+1}-u^n) + \frac{(u^{n+1}-u^n)^2 }4 \Bigl( 3(u^n)^2+ (u^{n+1})^2+ 2 u^n u^{n+1}-2 \Bigr).
\end{align*}

Thus
\begin{align}
  & \frac 1 {\tau} \| |\nabla|^{-1} (u^{n+1}-u^n) \|_2^2
  + E_{n+1}-E_n
+\frac{\nu}2 \| \nabla(u^{n+1}-u^n ) \|_2^2  + (A+\frac 12) \|u^{n+1}-u^n \|_2^2  \notag \\
& \qquad= \frac 14 ( (u^{n+1}-u^n)^2, 3 (u^n)^2 + (u^{n+1})^2+ 2 u^n u^{n+1} ) \notag \\
&  \qquad \le \|u^{n+1}-u^n\|_2^2 \cdot
{\frac 14 \Bigl( 3\|u^n\|_{\infty}^2 +  \|u^{n+1}\|_{\infty}^2+ 2
\|u^n\|_{\infty} \|u^{n+1}\|_{\infty} \Bigr)} \notag \\
&\qquad  \le \|u^{n+1}-u^n\|^2_2\cdot \Bigl( \|u^n\|_{\infty}^2 + \frac 12 \|u^{n+1}\|_{\infty}^2 \Bigr). \label{z2_e1a}
\end{align}

Finally observe
\begin{align*}
&\frac 1 {\tau} \| |\nabla|^{-1} (u^{n+1}-u^n) \|_2^2 + \frac{\nu}2 \| \nabla(u^{n+1}-u^n ) \|_2^2 \notag \\
\ge\; & \sqrt{\frac{2\nu} {\tau} } \| |\nabla|^{-1}(u^{n+1}-u^n) \|_2 \| \nabla (u^{n+1}-u^n) \|_2
\ge \sqrt{\frac{2\nu}{\tau} } \| u^{n+1}-u^n\|_2^2.
\end{align*}

The desired inequality then follows easily. In the last step we used Lemma \ref{lem_simple_interp1}.

\end{proof}

\begin{rem}
By using the auxiliary function $g(s)=F(u^n+s(u^{n+1}-u^n))$ and the Taylor expansion
\begin{align*}
g(1)=g(0)+g^{\prime}(0) + \int_0^1 g^{\prime\prime}(s) (1-s)ds,
\end{align*}
we get
\begin{align*}
F(u^{n+1}) &=F(u^n) +f(u^n) (u^{n+1}-u^n) -\frac 1 2 (u^{n+1}-u^n)^2 \notag \\
& \qquad + (u^{n+1}-u^n)^2 \int_0^1 \tilde f^{\prime}( u^n+s(u^{n+1}-u^n) )  (1-s)ds,
\end{align*}
where $\tilde f(z)=z^3$ and $\tilde f^{\prime}(z)=3z^2$ (for $z\in \mathbb R$). From this it is easy to see
that
\begin{align*}
\text{left-hand side of \eqref{z2_e1a} }\le \|u^{n+1}-u^n\|_2^2 \cdot \frac 32
\max\{\|u^n\|_{\infty}^2,\|u^{n+1}\|_{\infty}^2\}.
\end{align*}
This bound will also suffice.
\end{rem}

\subsection*{Proof of Theorem \ref{thm_es_1}}
We inductively prove for all $n\ge 1$,
\begin{align}
& E_n \le E_0, \\
& \|u^n\|_{H^{\frac 32}} \le c_1 \cdot \Bigl(\frac{A+1}{\nu}+\frac 1{A\tau} \Bigr) \cdot (E_0+1),
\end{align}
where $c_1>0$ is the same absolute constant as in Lemma \ref{z1}.

We proceed in two steps. In Step 1 below, we first verify that if the statement holds
for some $n\ge 1$, then it holds for $n+1$. In Step 2, we check the ``base'' case, namely
 for $n=1$ the statement holds. We organize our whole argument in this reverse order (rather than checking the
 base case $n=1$ first and then performing induction) because the verification
 for the base case $n=1$ can be viewed as more or less a special case of the proof in Step 1.

\texttt{Step 1}: the induction step $n \Rightarrow n+1$. Assume the induction
holds for some $n\ge 1$.  We now
verify the statement for $n+1$.

By Lemma \ref{z1}, we have
\begin{align*}
\| u^{n+1}\|_{H^{\frac 32}} \le c_1 \cdot \Bigl(\frac{A+1} {\nu}+\frac 1{A\tau} \Bigr) \cdot (E_n+1)
\le c_1 \cdot \Bigl(\frac{A+1}{\nu}+\frac 1 {A\tau} \Bigr) \cdot (E_0+1).
\end{align*}

Thus we only need to check $E_{n+1} \le E_0$. In fact we shall show
$E_{n+1} \le E_n$.

By Lemma \ref{z2}, we only need to show the inequality
\begin{align} \label{z3_pf_L1}
 A+\frac 12 +\sqrt{\frac{2\nu}{\tau}} \ge
 \|u^n\|_{\infty}^2 +
 \frac 12\| u^{n+1} \|_{\infty}^2.
 \end{align}

We shall use  the log-interpolation inequality (see \eqref{log212} and choose $s=\frac 32$)
for any $f$ with mean zero:
\begin{align}
\| f\|_{L^{\infty}(\mathbb T^2)}
\le 1+d_1\cdot \| f \|_{\dot H^1(\mathbb T^2)}
\cdot \log \Bigl( \| f\|_{H^{\frac 32}(\mathbb T^2)} +3 \Bigr),
\label{log_3.24}
\end{align}
where $d_1>0$ is an absolute constant.

In the rest of this proof, to simplify the notation we shall use $X\lesssim_{E_0} Y$ to
denote $X\le C_{E_0} Y$, where $C_{E_0}$ is a constant depending only on $E_0$. Clearly
\begin{align}
\|u^n\|_{\infty} & \le 1+d_1 \| u^n \|_{\dot H^1}  \log \Bigl(\|u^n\|_{H^{\frac 32}} +3 \Bigr) \notag \\
& \le1+ {d_1 \cdot \sqrt{\frac{2E_0}{\nu}} \cdot \log\Bigl(3+ c_1\cdot \left(\frac{A+1}{\nu}+\frac 1 {A\tau} \right)
\cdot(E_0+1) \Bigr)}
\notag \\
& \lesssim_{E_0} \underbrace{\nu^{-\frac 12}(1+\log A + |\log \nu|)}_{=:m_0} + \nu^{-\frac 12} |\log (2+\frac 1 {\tau})|+1.
\end{align}

Here, in the above inequality, if $\tau \gtrsim  1$ then it is not difficult to check
that the $\log (2+\frac 1 {\tau}) $ term is bounded by a constant and  can be absorbed into $m_0$. In the rest of this proof we shall just
assume $0<\tau\ll 1$ without loss of generality. The case $\tau\gtrsim 1$ is similar and even easier.

Now
\begin{align*}
\|u^n\|_{\infty}^2 \lesssim_{E_0} m_0^2 + \nu^{-1} |\log \tau|^2+1.
\end{align*}

By \eqref{log_3.24} and Lemma \ref{z1}, we have (below in the third inequality
we drop $1/A$ since $A\ge 1$)
\begin{align}
\| u^{n+1} \|_{\infty} &  \lesssim 1+\|u^{n+1} \|_{\dot H^1} \log\left( \|u^{n+1}\|_{H^{\frac 32} } +3 \right) \notag \\
& \lesssim 1+(1+\frac 1 A+ \frac{\|u^n\|^2_{\infty}} A ) \|u^n\|_{\dot H^1} \log\left(\|u^{n+1}\|_{H^{\frac 32}} +3\right)
\notag \\
& \lesssim1+ (1+\frac{\|u^n\|^2_{\infty}} A ) \|u^n\|_{\dot H^1} \log\left(\|u^{n+1}\|_{H^{\frac 32}} +3\right)
\notag \\
&\lesssim_{E_0} 1+(1+\frac{ m_0^2+ \nu^{-1} |\log \tau|^2} A) \cdot\left(m_0+\nu^{-\frac 12} |\log \tau| \right) \notag \\
& \lesssim_{E_0} 1+m_0+ \nu^{-\frac 12} |\log \tau| + \frac{m_0^3+\nu^{-\frac 32}|\log \tau|^3} {A} \notag \\
& \lesssim_{E_0} m_0 + \frac{m_0^3} A + 1+ \nu^{-\frac 32} |\log \tau|^3.
\end{align}
Therefore
\begin{align*}
\|u^n\|_{\infty}^2 + \|u^{n+1}\|_{\infty}^2
\lesssim_{E_0} \left( m_0+\frac{m_0^3} A \right)^2 + 1+ \nu^{-3} |\log \tau|^6.
\end{align*}
Therefore to show inequality \eqref{z3_pf_L1}, it suffices to prove
\begin{align} \label{z3_pf_L22}
A+ \sqrt{\frac{\nu}{\tau}} \ge C_{E_0} \cdot\left(  \Bigl(m_0+\frac{m_0^3}A \Bigr)^2 +1 + \nu^{-3} |\log \tau|^6 \right),
\end{align}
where
\begin{align*}
m_0 = \nu^{-\frac 12} (1+\log A+ |\log \nu|).
\end{align*}

Now we discuss two cases.

\texttt{Case 1}: $\sqrt{\frac{\nu}{\tau}} \ge C_{E_0} \nu^{-3} |\log \tau|^6$. In this case we
choose $A$ such that
\begin{align*}
A\gg_{E_0} m_0^2 = \nu^{-1} ( 1+ \log A + |\log \nu|)^2.
\end{align*}
Clearly for $\nu\gtrsim 1$, we just need to choose $A\gg_{E_0} 1$. On the other hand,
for $0<\nu\ll 1$, it suffices to take
\begin{align*}
A =  \beta \cdot  \nu^{-1} |\log \nu|^2,
\end{align*}
with $\beta$ sufficiently large depending only on $E_0$. Thus in both cases if we
take
\begin{align*}
A = \beta \cdot \max\{ \nu^{-1} |\log \nu|^2, 1 \},
\end{align*}
with $\beta \gg_{E_0} 1$, then
\eqref{z3_pf_L22} holds.

\texttt{Case 2}: $\sqrt{\frac{\nu}{\tau}} \le C_{E_0} \nu^{-3} |\log \tau|^6$. In this case we have
\begin{align*}
|\log \tau| \lesssim_{E_0} 1+|\log \nu|.
\end{align*}
In this case we will not prove \eqref{z3_pf_L22} but prove \eqref{z3_pf_L1} directly.
We first go back to the bound on $\|u^n\|_{\infty}$. It is easy to check that
\begin{align*}
&\|u^n\|_{\infty} \lesssim_{E_0} m_0, \\
&\|u^{n+1}\|_{\infty} \lesssim_{E_0} \left(1+ \frac{m_0^2} A \right) m_0.
\end{align*}
The needed inequality on $A$ then takes the form
\begin{align*}
A \ge C_{E_0} \cdot\left(1+m_0+ \frac{m_0^3}A \right)^2.
\end{align*}
Again we only need to choose $A$ such that $A\gg_{E_0} m_0^2$. The same choice of $A$ as in Case 1
(with $\beta$ larger if necessary) works.

Concluding from both cases, we have proved
 the inequality \eqref{z3_pf_L1} holds. This completes the induction step for $n\Rightarrow n+1$.

\texttt{Step 2}: verification of the base step $n=1$. By Lemma \ref{z1} we have
\begin{align*}
\|u^1\|_{H^{\frac 32}} \le c_1 \cdot\Bigl( \frac{A+1}{\nu}+\frac{1}{ A\tau} \Bigr) \cdot ( E_0+1).
\end{align*}
Therefore we only need to check $E_1\le E_0$. This amounts to checking the inequality
\begin{align*}
A+\frac 12 + \sqrt{\frac{2\nu}{\tau}} \ge \|\Pi_N u_0\|_{\infty}^2 + \frac 12 \|u^1\|_{\infty}^2.
\end{align*}
By Lemma \ref{z1},
\begin{align*}
\| u^1 \|_{\dot H^1}
& \le (1+\frac 1 A+ \frac 3A \| \Pi_N u_0 \|_{\infty}^2)
\cdot \| u_0 \|_{\dot H^1} \notag \\
& \le (1+\frac 1 A+ \frac 3 A \| \Pi_N u_0\|_{\infty}^2) \cdot \sqrt{\frac{2E_0} {\nu}}.
\end{align*}
Therefore
\begin{align*}
\|u^1\|_{\infty}
&\lesssim1+ \|u^1 \|_{\dot H^1} \log( \|u^1\|_{ H^{\frac 32}} + 3) \notag \\
&\lesssim 1+(1+\frac 1 A+ \frac 3 A \|\Pi_N u_0 \|_{\infty}^2)
\cdot \sqrt{ \frac{2E_0}{\nu}} \cdot
\log \bigl( 3+ c_1(\frac{A+1}{\nu} + \frac 1 {A\tau} ) (E_0+1) \bigr) \notag \\
& \lesssim_{E_0} 1+(1+\frac 1 A + \frac 3A \| \Pi_N u_0 \|_{\infty}^2)
\cdot \nu^{-\frac 12} \cdot ( 1+ \log A + |\log \nu| + |\log \tau|).
\end{align*}
Thus we need to choose $A$ such that
\begin{align*}
&A + \frac 12+ \sqrt{ \frac{2\nu}{\tau}} \ge \| \Pi_N u_0 \|_{\infty}^2 +1\notag \\
& \quad\quad + \tilde{C}_{E_0} \cdot (1+\frac 1 A + \frac 3A \|\Pi_N u_0\|_{\infty}^2)^2
\cdot \nu^{-1}
\cdot (1+\log A + |\log \nu| + |\log \tau| )^2,
\end{align*}
where $\tilde C_{E_0}$ is a constant depending only on $E_0$.

By Sobolev embedding, we have
\begin{align*}
\| \Pi_N u_0 \|_{L^{\infty}(\mathbb T^2)} \lesssim \| \Pi_N u_0\|_{H^2 (\mathbb T^2)} \lesssim
\| u_0 \|_{H^2(\mathbb T^2)}.
\end{align*}

Thus it suffices to take $A$ such that
\begin{align*}
A\gg_{E_0} \|u_0\|_{H^2}^2+ \nu^{-1} |\log \nu|^2+1.
\end{align*}
This completes the proof of Theorem \ref{thm_es_1}.

\subsection{Proof of Theorem \ref{thm_es_2}}
This is similar to the proof of Theorem \ref{thm_es_1}. Therefore we only sketch the needed
modifications. In terms of scaling it is useful to think of $\nabla h^n$ as $u^n$ in Theorem
\ref{thm_es_1}.
Write \eqref{semi_e2} as
\begin{align}
h^{n+1} = \frac{1-A \tau \Delta} {1+\nu \tau \Delta^2-A\tau \Delta} h^n
+ \frac{\tau  \qn }{1+\nu \tau \Delta^2 -A\tau \Delta}  \nabla \cdot \bigl( g(\nabla h^n) \bigr).
\end{align}

In place of Lemma \ref{z1} we have the following lemma. We omit the proof since it is quite similar.
\begin{lem} \label{z1p}
There is an absolute constant $c_1>0$ such that
\begin{align}
& \| h^{n+1} \|_{H^{\frac 52}(\mathbb T^2)} \le c_1 \cdot (\frac{A+1}{\nu} +\frac 1 {A\tau})\cdot (E_n+1), \notag \\
& \| h^{n+1} \|_{\dot H^2(\mathbb T^2)}
\le (1+ \frac {1} A + \frac{3}A \|\nabla h^n\|_{\infty}^2)\cdot \|h^n\|_{\dot H^2(\mathbb T^2)}.
 \notag
\end{align}
Here $E_n=E(h^n)$ (see \eqref{MBE_energy}).
\end{lem}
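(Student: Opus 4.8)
The plan is to mirror the proof of Lemma \ref{z1} under the scaling correspondence $\nabla h^n \leftrightarrow u^n$, working entirely on the Fourier side. Writing out the propagator mode by mode, the scheme splits $h^{n+1}$ into a linear part carrying the multiplier
\[
M_1(k)=\frac{1+A\tau|k|^2}{1+\nu\tau|k|^4+A\tau|k|^2}
\]
applied to $\hat h^n(k)$, plus a nonlinear part whose multiplier is $\tau/D(k)$ composed with the symbol $ik\cdot$ coming from $\nabla\cdot$ and the frequency truncation $|k|\le N$ coming from $\qn$, where $D(k)=1+\nu\tau|k|^4+A\tau|k|^2$. Two elementary facts will be used throughout: $0\le M_1(k)\le 1$ for every $k$, and on $\mathbb T^2$ every nonzero frequency satisfies $|k|\ge 1$. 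I also record that $h^n$ has mean zero (the scheme preserves it), so $H^s\sim\dot H^s$ for $h^n$, and that the energy controls $\|\nabla h^n\|_2$: since $\int_\Omega G(\nabla h^n)\,dx$ dominates $\|\nabla h^n\|_4^4$ up to lower-order terms, one gets $\|\nabla h^n\|_2\lesssim\|\nabla h^n\|_4\lesssim(E_n+1)^{1/4}\lesssim E_n+1$.

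For the $H^{5/2}$ estimate I treat the two pieces separately. For the linear piece, factoring one power of $|k|$ out of $\hat h^n$ gives the bound $\sup_k\bigl(|k|^{3/2}M_1(k)\bigr)\cdot\|\nabla h^n\|_2$, and the supremum here is exactly the one already estimated in Lemma \ref{z1}, namely $\sup_k|k|^{3/2}M_1(k)\lesssim \tfrac{A}{\nu}+\tfrac{1}{A\tau}$. For the nonlinear piece, the $ik\cdot$ factor makes the multiplier vanish at $k=0$, so I may subtract the mean $\overline{g(\nabla h^n)}$ freely; writing $\tfrac{\tau|k|^{7/2}}{D(k)}=\tfrac{\tau|k|^4}{D(k)}\cdot|k|^{-1/2}$ and using $\sup_k\tfrac{\tau|k|^4}{D(k)}\le\nu^{-1}$ (from $D(k)\ge\nu\tau|k|^4$) reduces the term to $\tfrac1\nu\bigl\|\,|\nabla|^{-1/2}\bigl(g(\nabla h^n)-\overline{g(\nabla h^n)}\bigr)\bigr\|_2$. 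The Sobolev embedding $|\nabla|^{-1/2}\colon L^{4/3}(\mathbb T^2)\to L^2(\mathbb T^2)$ (the same one implicit in Lemma \ref{z1}) together with $\|g(\nabla h^n)\|_{4/3}\lesssim\|\nabla h^n\|_4^3+\|\nabla h^n\|_{4/3}\lesssim E_n+1$ closes this piece. Adding the two contributions gives $\tfrac{A}{\nu}+\tfrac1{A\tau}+\tfrac1\nu$ times $(E_n+1)$, which is the claimed $\bigl(\tfrac{A+1}{\nu}+\tfrac1{A\tau}\bigr)(E_n+1)$.

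For the $\dot H^2$ estimate I split again. The linear piece contributes at most $\|h^n\|_{\dot H^2}$ because $M_1\le 1$. For the nonlinear piece, $\|h^{n+1}_{\mathrm{nl}}\|_{\dot H^2}$ equals the $\ell^2$ norm of $|k|^2\cdot\tfrac{\tau}{D(k)}\,(ik\cdot\hat g)$; pulling out one power $|k|$ and using the clean bound $\sup_k\tfrac{\tau|k|^2}{D(k)}\le\tfrac1A$ (immediate from $D(k)\ge A\tau|k|^2$) yields $\tfrac1A\|g(\nabla h^n)\|_{\dot H^1}$. A chain-rule computation with the Jacobian $Dg(z)=(|z|^2-1)I+2z\otimes z$, whose entries are bounded by $1+3|z|^2$, then gives $\|g(\nabla h^n)\|_{\dot H^1}=\|\nabla(g(\nabla h^n))\|_2\le(1+3\|\nabla h^n\|_\infty^2)\|h^n\|_{\dot H^2}$, using the identity $\|D^2 h^n\|_2=\|h^n\|_{\dot H^2}$. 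Combining the two pieces produces exactly $(1+\tfrac1A+\tfrac3A\|\nabla h^n\|_\infty^2)\|h^n\|_{\dot H^2}$.

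The only step demanding genuine care is the supremum $\sup_k|k|^{3/2}M_1(k)\lesssim\tfrac{A}{\nu}+\tfrac1{A\tau}$, which requires a short case split into low and high frequencies to balance the two contributions (the $\tfrac{A}{\nu}$ term governs large $|k|$ via $D(k)\ge\nu\tau|k|^4$, the $\tfrac1{A\tau}$ term governs the intermediate range via $D(k)\ge A\tau|k|^2$). Everything else is bookkeeping: the remaining multiplier bounds are one-line consequences of $D(k)\ge\max\{\nu\tau|k|^4,\,A\tau|k|^2\}$, and the nonlinear estimates are direct applications of Sobolev embedding and the chain rule. I therefore expect no real obstruction, the argument being structurally identical to that of Lemma \ref{z1} with $u^n$ replaced by $\nabla h^n$ and all Sobolev indices shifted up by one.
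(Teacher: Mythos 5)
Your argument is correct and is precisely the adaptation of the paper's proof of Lemma \ref{z1} (multiplier bounds $D(k)\ge\max\{\nu\tau|k|^4,\,A\tau|k|^2,\,1\}$, the $|\nabla|^{-1/2}:L^{4/3}\to L^2$ embedding for the mean-corrected nonlinearity, and the pointwise Jacobian bound for the $\dot H^2$ part) under the substitution $u^n\leftrightarrow\nabla h^n$; the paper itself omits the proof of Lemma \ref{z1p} for exactly this reason. No gaps.
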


\begin{lem} \label{z2p}
For any $n\ge 0$,
\begin{align}
 &
 E_{n+1}-E_n  + (A+\frac 12+\sqrt{\frac{2\nu}{\tau}}) \|\nabla(h^{n+1}-h^n)\|_2^2 \notag \\
 \le &\; \|\nabla (h^{n+1}-h^n )\|_2^2
 \cdot \frac 32 \max\{ \|\nabla h^n\|_{\infty}^2, \| \nabla h^{n+1} \|_{\infty}^2 \}.
 \end{align}
\end{lem}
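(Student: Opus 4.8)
The plan is to mirror the proof of Lemma \ref{z2} in the CH case, using the energy-identity technique but now testing against $(-\Delta)^0=\mathrm{Id}$ rather than $(-\Delta)^{-1}$, since the MBE equation \eqref{mbe} is an $L^2$ gradient flow (whereas CH is an $H^{-1}$ gradient flow). Concretely, I would take the $L^2$ inner product of the scheme \eqref{semi_e2} with $h^{n+1}-h^n$. The biharmonic term $-\nu\Delta^2 h^{n+1}$ paired with $h^{n+1}-h^n$ produces, after integration by parts, the quantity $-\nu(\Delta h^{n+1},\Delta(h^{n+1}-h^n))$, and applying the polarization identity \eqref{aba_1} with $a=\Delta h^n$, $b=\Delta h^{n+1}$ yields $-\tfrac{\nu}{2}(\|\Delta h^{n+1}\|_2^2-\|\Delta h^n\|_2^2+\|\Delta(h^{n+1}-h^n)\|_2^2)$. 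The stabilization term $A\Delta(h^{n+1}-h^n)$ tested against $h^{n+1}-h^n$ gives $-A\|\nabla(h^{n+1}-h^n)\|_2^2$ after one integration by parts.

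The genuinely nonlinear part is $\qn\nabla\cdot(g(\nabla h^n))$ tested against $h^{n+1}-h^n$; since both are frequency-localized to $|k|\le N$ the projection $\qn$ can be dropped, and integrating by parts moves the divergence onto the test function to give $-(g(\nabla h^n),\nabla(h^{n+1}-h^n))$. The key step is then to reproduce the Taylor-expansion argument from the Remark following Lemma \ref{z2}, but applied to $G$ rather than $F$. I would set $\mathbf{w}^n:=\nabla h^n$ and use $G(z)=\tfrac14(|z|^2-1)^2$ with $\nabla G=g$; writing $G(\mathbf w^{n+1})-G(\mathbf w^n)$ via the auxiliary function $\gamma(s)=G(\mathbf w^n+s(\mathbf w^{n+1}-\mathbf w^n))$ and the second-order Taylor formula $\gamma(1)=\gamma(0)+\gamma'(0)+\int_0^1\gamma''(s)(1-s)\,ds$ produces
\begin{align*}
G(\mathbf w^{n+1})=G(\mathbf w^n)+g(\mathbf w^n)\cdot(\mathbf w^{n+1}-\mathbf w^n)-\tfrac12|\mathbf w^{n+1}-\mathbf w^n|^2+|\mathbf w^{n+1}-\mathbf w^n|^2\,R,
\end{align*}
where $R$ collects the contribution of the Hessian of the quartic part $\tilde G(z)=\tfrac14|z|^4$. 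Integrating over $\Omega$ identifies the $g(\mathbf w^n)\cdot(\mathbf w^{n+1}-\mathbf w^n)$ term with the nonlinear inner product above, so $(g(\nabla h^n),\nabla(h^{n+1}-h^n))$ becomes $E_{n+1}-E_n$ plus a quadratic term $+\tfrac12\|\nabla(h^{n+1}-h^n)\|_2^2$ minus the remainder.

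Collecting everything, the $\tfrac1\tau\||\nabla|^{-1}(\cdot)\|$ term from CH is replaced by $\tfrac1\tau\|h^{n+1}-h^n\|_2^2$, and the analogue of the final AM–GM step combines this with $\tfrac\nu2\|\Delta(h^{n+1}-h^n)\|_2^2$ to yield $\sqrt{2\nu/\tau}\,\|\nabla(h^{n+1}-h^n)\|_2^2$ via the interpolation $\|\nabla w\|_2^2\le\|w\|_2\|\Delta w\|_2$ applied with $w=h^{n+1}-h^n$. The Hessian remainder $R$ is bounded pointwise by $\tfrac32\max\{|\mathbf w^n|^2,|\mathbf w^{n+1}|^2\}$, which passes to $\tfrac32\max\{\|\nabla h^n\|_\infty^2,\|\nabla h^{n+1}\|_\infty^2\}$, giving the stated right-hand side. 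The main obstacle I anticipate is bookkeeping the vector-valued Taylor expansion cleanly: unlike the scalar CH case where $\tilde f'(z)=3z^2$ is a single quantity, here $\tilde G$ has Hessian $|z|^2\mathrm{Id}+2z\otimes z$, so I must verify that its quadratic-form action on $\mathbf w^{n+1}-\mathbf w^n$ is genuinely controlled by $\tfrac32\max\{|\mathbf w^n|^2,|\mathbf w^{n+1}|^2\}$ uniformly in $s\in[0,1]$, which is the vector analogue of the scalar estimate used in the CH remark.
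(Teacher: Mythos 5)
Your proposal is correct and follows essentially the same route as the paper: test the scheme with $h^{n+1}-h^n$, use the polarization identity for the biharmonic and stabilization terms, Taylor-expand $G(\nabla h^n+s(\nabla h^{n+1}-\nabla h^n))$ to second order, bound the Hessian $\partial_{ij}\tilde G(z)=|z|^2\delta_{ij}+2z_iz_j$ pointwise by $3|z|^2$ (yielding the factor $\tfrac32\max\{\cdot\}$ after the $\int_0^1(1-s)\,ds=\tfrac12$), and finish with $\|\nabla w\|_2^2\le\|w\|_2\|\Delta w\|_2$ plus AM--GM to produce the $\sqrt{2\nu/\tau}$ term. The vector-valued bookkeeping you flag as a potential obstacle is exactly what the paper carries out, and it goes through as you anticipate.
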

\begin{proof}
Taking the inner product with $(h^{n+1}-h^n)$ on both sides of \eqref{semi_e2}, we get
\begin{align*}
   & \frac 1 {\tau} \| h^{n+1}-h^n \|_2^2 + \frac {\nu}2 ( \| \Delta h^{n+1}\|_2^2 -\| \Delta h^n \|_2^2
   + \| \Delta(h^{n+1}-h^n) \|_2^2 ) + A \| \nabla (h^{n+1}-h^n) \|_2^2 \notag \\
   = \; & - (g(\nabla h^n), \nabla (h^{n+1}-h^n) ).
\end{align*}

Recall $g(z)=(|z|^2-1) z=\nabla G$ and $G(z)=\frac 14 (|z|^2-1)^2$. Introduce
\begin{align*}
H(s)= G(\nabla h^n + s(\nabla h^{n+1} -\nabla h^n) ).
\end{align*}

By using the expansion
\begin{align*}
H(1) = H(0) + H^{\prime}(0) + \int_0^1 H^{\prime\prime}(s) (1-s) ds,
\end{align*}
we get
\begin{align*}
G(\nabla h^{n+1})-G(\nabla h^n) &= g(\nabla h^n)\cdot (\nabla h^{n+1}-\nabla h^n) \notag \\
&\quad + \sum_{i,j=1}^2 \partial_i(h^{n+1}-h^n) \partial_j (h^{n+1}-h^n)
\int_0^1 (\partial_{ij} G)(\nabla h^n +s(\nabla h^{n+1}-\nabla h^n) ) (1-s) ds,
\end{align*}

Now denote $\tilde G (z)=\frac 14 |z|^4 $. Then

\begin{align*}
   & E_{n+1}-E_n+ \frac 1 {\tau} \| h^{n+1}-h^n \|_2^2 +\frac {\nu}2
    \| \Delta(h^{n+1}-h^n) \|_2^2  + (A+\frac 12) \| \nabla (h^{n+1}-h^n) \|_2^2 \notag \\
   = \; &
   \sum_{i,j=1}^2 \biggl(\partial_i(h^{n+1}-h^n) \partial_j (h^{n+1}-h^n)
\int_0^1 (\partial_{ij} \tilde G)(\nabla h^n +s(\nabla h^{n+1}-h^n) ) (1-s) ds,\; 1
\biggr),
\end{align*}
where $1$ represents the constant function with value $1$.

Now since $\partial_{ij} \tilde G(z)=|z|^2 \delta_{ij} + 2 z_j z_i$, we have
the point-wise bound $|(\partial_{ij} \tilde G)(z) | \le 3 |z|^2$. Thus
\begin{align*}
\|(\partial_{ij} \tilde G)(\nabla h^n +s(\nabla h^{n+1}-h^n) )\|_{\infty} \le 3 \max \{ \|\nabla h^n \|_{\infty}^2,
\|\nabla h^{n+1} \|_{\infty}^2 \}.
\end{align*}
The desired inequality now follows from this and the simple interpolation inequality (see Lemma \ref{lem_simple_interp1})
\begin{align}
\|\nabla h\|_2 \le \| h\|_2^{\frac 12} \| \Delta h\|_2^{\frac 12}.
\end{align}
This completes the proof of the lemma.
\end{proof}

\subsection*{Proof of Theorem \ref{thm_es_2}}
We only need to check the induction hypothesis
\begin{align*}
& E_n \le E_0, \\
& \|h^n\|_{H^{\frac 52}} \le c_1 \cdot \Bigl(\frac{A+1}{\nu}+\frac 1 {A\tau} \Bigr) \cdot (E_0+1),
\end{align*}
for $n+1$. Here $c_1>0$ is the same absolute constant in Lemma \ref{z1p}.

By Lemma \ref{z1p}, we have
\begin{align*}
\| h^{n+1}\|_{H^{\frac 52}} \le c_1 \cdot \Bigl(\frac{A+1} {\nu}+\frac 1 {A\tau} \Bigr)\cdot (E_n+1)
\le c_1 \cdot \Bigl(\frac{A+1}{\nu}+\frac 1 {A\tau}\Bigr) \cdot (E_0+1).
\end{align*}

Thus we only need to check $E_{n+1} \le E_n$.
By Lemma \ref{z2p}, this amounts to proving the inequality
\begin{align} \label{z3_pf_LLL1}
 A+\frac 12 \ge
\frac 3 2 \max \{ \|\nabla h^n\|_{\infty}^2, \|\nabla h^{n+1}\|_{\infty}^2 \}.
 \end{align}

We shall again use  the inequality
\begin{align}
\| f\|_{L^{\infty}(\mathbb T^2)}
\le 1+ d_1\cdot \| f \|_{\dot H^1(\mathbb T^2)}
\cdot \log\Bigl( \| f\|_{H^{\frac 32}(\mathbb T^2)} +3\Bigr),
\end{align}
where $d_1>0$ is an absolute constant, and $f$ has mean zero.
Clearly
\begin{align}
\|\nabla h^n\|_{\infty} & \le 1+d_1 \| h^n \|_{\dot H^2}  \log(\|h^n\|_{H^{\frac 52}} +3) \notag \\
& \le1+ {d_1 \cdot \sqrt{\frac{2E_0}{\nu}} \cdot \log\Bigl(3+ c_1\cdot
 \left(\frac{A+1}{\nu}+\frac 1{A\tau}\right) \cdot(E_0+1) \Bigr)}.
\end{align}

The rest of the argument now is similar to that in the Proof of Theorem \ref{thm_es_1}. We omit
further repetitive details.

\section{Bounds on the PDE solution of CH}
\setcounter{equation}{0}

Consider
\begin{align}
\begin{cases}
\partial_t w = - \nu \Delta^2 w + \Delta (f (w)), \\
w\Bigr|_{t=0} =w_0.
\end{cases}
\end{align}
Recall that the corresponding energy $E(\cdot)$ is defined by \eqref{CH_energy}.

\begin{prop} \label{bc1}
Let $0<\nu \lesssim 1$. Assume the initial data $w_0 \in H^2(\mathbb T^2)$ with mean zero.
Assume $\| w_0\|_{\infty} \lesssim 1$. Then
\begin{align}
\sup_{0\le t <\infty} \|w(t) \|_{\infty}
\lesssim 1+\sqrt{\frac {E_0} {\nu}}
\cdot \Bigl( |\log \nu| + |\log E_0| +1 \Bigr),
\end{align}
where $E_0= E(w_0)$.
\end{prop}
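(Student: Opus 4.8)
The plan is to derive everything from the a priori energy monotonicity together with the logarithmic interpolation inequality \eqref{log212}. Applying \eqref{log212} with $s=\tfrac32$ to the (mean-zero) solution gives, for each fixed $t$,
\begin{align*}
\|w(t)\|_{L^\infty(\mathbb T^2)} \lesssim \|w(t)\|_{\dot H^1(\mathbb T^2)}\,\log\bigl(3+\|w(t)\|_{H^{3/2}(\mathbb T^2)}\bigr).
\end{align*}
Since the higher norm enters only under a logarithm, I do not need a sharp estimate on it: it suffices to control $\|w(t)\|_{\dot H^1}$ exactly and to bound $\|w(t)\|_{H^{3/2}}$ by a fixed power of $\nu^{-1}$ and $E_0$, uniformly in $t$.

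The first ingredient is immediate from the energy identity \eqref{intro_-10}, which yields $E(w(t))\le E_0$ for all $t\ge0$. Reading off the two nonnegative pieces of \eqref{CH_energy} gives the uniform gradient bound $\|w(t)\|_{\dot H^1}\le\sqrt{2E_0/\nu}$, and, because $F\ge0$ and $\int_\Omega F(w(t))\le E_0$, the uniform bound $\|w(t)\|_{L^4}\lesssim(E_0+1)^{1/4}$; combined with the Poincar\'e and Gagliardo--Nirenberg inequalities these control $\|w(t)\|_{L^6}$ and hence $\|f(w(t))\|_2=\|(w(t))^3-w(t)\|_2$ by a fixed power of $\nu^{-1}$ and $E_0$, still uniformly in $t$. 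This already produces the desired $\sqrt{E_0/\nu}$ prefactor in the final bound once the logarithmic factor is dealt with.

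The heart of the matter is the uniform-in-time estimate on $\|w(t)\|_{H^{3/2}}$, for which I would establish a uniform $\dot H^2$ bound, and two facts drive this. On the one hand, a time-averaged bound comes from the elliptic relation $-\nu\Delta w = \mu - f(w)$ with $\mu=-\nu\Delta w+f(w)$: using $\nu\|\Delta w\|_2\le\|\mu\|_2+\|f(w)\|_2$, controlling $\|\mu\|_2$ by its uniformly bounded mean plus $\|\nabla\mu\|_2$, and invoking the integrated dissipation $\int_0^\infty\|\nabla\mu\|_2^2\,dt\le E_0$ from \eqref{intro_-10}, one gets $\int_t^{t+1}\|\Delta w\|_2^2\,ds$ bounded by a fixed power of $\nu^{-1}$ and $E_0$ for every $t\ge0$. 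On the other hand, a higher-order energy estimate, obtained by testing the equation against $\Delta^2 w$,
\begin{align*}
\frac{d}{dt}\tfrac12\|\Delta w\|_2^2 + \nu\|\Delta^2 w\|_2^2 = \int_\Omega \Delta^2 w\,\bigl(3w^2\Delta w+6w|\nabla w|^2-\Delta w\bigr)\,dx,
\end{align*}
lets one spend the biharmonic dissipation $\nu\|\Delta^2 w\|_2^2$ to absorb the nonlinear terms via Young and two-dimensional interpolation, leaving a forcing controlled by the $\dot H^1$, $L^4$, $L^6$ bounds above. This produces a differential inequality $\frac{d}{dt}\|\Delta w\|_2^2\le g\,\|\Delta w\|_2^2+h$ whose coefficients have unit-interval time-integrals bounded by a power of $\nu^{-1}$ and $E_0$; feeding in the time-average from the first fact and applying the uniform Gronwall lemma then gives $\sup_{t\ge1}\|\Delta w(t)\|_2$ bounded by a fixed power of $\nu^{-1}$ and $E_0$. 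Interpolating against the uniform $\dot H^1$ bound controls $\|w(t)\|_{H^{3/2}}$ the same way, and taking the logarithm converts the polynomial factors into exactly $|\log\nu|+|\log E_0|+1$.

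The main obstacle is twofold. First, one must carry out the interpolation in the higher-order estimate so that $3w^2\Delta w$ and $6w|\nabla w|^2$ are genuinely dominated by $\nu\|\Delta^2 w\|_2^2$ \emph{without} re-introducing $\|w\|_\infty$ (the very quantity being estimated); this forces one to trade all $L^\infty$ factors for $L^4$/$L^6$ and $\dot H^1$ norms and to track the $\nu$-weights so that the exponents in the final polynomial are explicit and finite. Second, the uniform Gronwall argument only controls $t$ bounded away from $0$, and on the transient interval the estimate would otherwise depend on $\|w_0\|_{H^2}$, which is not controlled by $E_0$ alone. I would remove this by using the integrability $\int_0^1\|\Delta w\|_2^2\,ds\lesssim_{\nu,E_0}1$ to pick a good starting time in $[0,1]$ at which $\|\Delta w\|_2$ is already controlled, propagating from there; the remaining short-time range is handled by a direct mild-solution/smoothing estimate for $e^{-\nu t\Delta^2}\Delta$, which keeps $\|w(t)\|_\infty$ bounded in terms of $\|w_0\|_\infty\lesssim1$ and the energy-based bounds only. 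This removal of the $H^2$-data dependence on the transient, together with the explicit $\nu$-bookkeeping, is the delicate part; the final application of \eqref{log212} is then routine.
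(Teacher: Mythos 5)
Your architecture (energy identity for the $\dot H^1$ bound, a uniform higher-order bound fed into the log-interpolation inequality \eqref{log212}) is sound in outline and genuinely different from the paper, which never runs a higher-order energy estimate: it instead writes the Duhamel formula from the nearby time $t_1=t-\tfrac12\epsilon_0\nu$ and uses the smoothing of $e^{-\nu(t-s)\Delta^2}$ over that window of length $O(\nu)$ to bound $\|w(t)\|_{\dot H^{1+}}$ by $\nu^{-1}\bigl((E_0/\nu)^{3/2}+(E_0/\nu)^{1/2}\bigr)$ in one stroke, with the transient $0\le t\le\epsilon_0\nu$ handled by a continuity argument in $L^\infty$. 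However, your proposal has a genuine quantitative gap exactly where the proposition's content lies. In the higher-order estimate, after Young's inequality the term $3w^2\Delta w$ contributes a coefficient $g\sim \nu^{-5/3}\,\mathrm{poly}(E_0,\nu^{-1})$ in $\frac{d}{dt}\|\Delta w\|_2^2\le g\|\Delta w\|_2^2+h$ (using $\|w^2\Delta w\|_2\lesssim\|w\|_{L^8}^2\|\Delta w\|_2^{3/4}\|\Delta^2 w\|_2^{1/4}\cdot\|\Delta^2w\|_2^{\,\cdot}$ and absorbing $\nu\|\Delta^2w\|_2^2$). The uniform Gronwall lemma over \emph{unit} windows then yields
\begin{align*}
\sup_{t}\|\Delta w(t)\|_2^2\;\lesssim\;\Bigl(a_3+a_2\Bigr)\,e^{\int_t^{t+1}g\,ds}\;=\;\mathrm{poly}(\nu^{-1},E_0)\cdot e^{C\nu^{-5/3}\mathrm{poly}},
\end{align*}
which is \emph{not} a fixed power of $\nu^{-1}$. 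Since this quantity sits inside the logarithm in \eqref{log212}, the exponential factor degrades the final bound from $\sqrt{E_0/\nu}\,|\log\nu|$ to $\sqrt{E_0/\nu}\cdot\nu^{-5/3}$, i.e.\ it destroys precisely the logarithmic dependence the proposition asserts. The step ``applying the uniform Gronwall lemma then gives $\sup_{t\ge1}\|\Delta w(t)\|_2$ bounded by a fixed power of $\nu^{-1}$ and $E_0$'' therefore does not follow from the stated ingredients.

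The gap is repairable, and the repair is instructive because it recovers the paper's key device: you must run the Gronwall/propagation step over windows of length $r\sim 1/g\sim\nu^{5/3}\,\mathrm{poly}^{-1}$ rather than length $1$, so that $e^{\int_t^{t+r}g}=O(1)$ while $\frac1r\int_t^{t+r}\|\Delta w\|_2^2\,ds$ remains polynomial in $\nu^{-1}$ and $E_0$ (the paper's choice of the window $[t-\tfrac12\epsilon_0\nu,\,t]$ in its Duhamel argument plays exactly this role). Relatedly, your ``good starting time'' $t_*\in[0,1]$ leaves a potentially uncovered interval between $\epsilon_0\nu$ (where the $L^\infty$ continuity argument for the mild solution necessarily stops, since it requires $\nu^{-1/2}t^{1/2}\|w\|_\infty^3\lesssim\|w\|_\infty$) and $t_*$; choose $t_*\in[0,\epsilon_0\nu]$ instead, which the time-averaged $\|\Delta w\|_2^2$ bound still permits at polynomial cost. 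With these two adjustments your energy-method route closes and gives the stated estimate; without them it proves a strictly weaker bound.
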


\begin{proof}
First consider the regime $0<t\ll \nu$. Write
\begin{align*}
w(t) = e^{-\nu t \Delta^2} w_0 + \int_0^t \Delta e^{-\nu (t-s) \Delta^2} f(w(s)) ds.
\end{align*}
Then
\begin{align}
\| w(t) \|_{\infty} & \lesssim \|w_0 \|_{\infty}
+ \int_0^t \nu^{-\frac 12} (t-s)^{-\frac 12} \| f(w(s) )\|_{\infty} ds \notag \\
& \lesssim \| w_0\|_{\infty} + \nu^{-\frac 12} t^{\frac 12}
\cdot \left( \| w\|_{L_{s,x}^{\infty}([0,t])}^3 + \| w\|_{L_{s,x}^{\infty}([0,t])} \right).
\end{align}
By using a continuity argument (on the quantity $\|w\|_{L_{s,x}^{\infty}([0,t])}$), we get
\begin{align}
\sup_{0\le t \le \epsilon_0 \nu} \| w(t) \|_{\infty} \lesssim 1,
\end{align}
where $\epsilon_0>0$ is a sufficiently small absolute constant. (Strictly speaking the
value of $\epsilon_0$ depends on the implied constants hidden in the inequalities $\| w_0\|_{\infty} \lesssim 1$
and $0<\nu \lesssim 1$.)

Next we consider the $L^{\infty}$ bound in the time regime $t\ge \epsilon_0 \nu$. First observe that by using
energy conservation, we have
\begin{align*}
\| \nabla w(t) \|_2 \lesssim \sqrt{\frac {E_0}{\nu}}.
\end{align*}
Set $t_1=t-\frac 12 \epsilon_0 \nu$. Then
\begin{align*}
w(t) = e^{-\nu (t-t_1) \Delta^2} w(t_1)
+ \int_{t_1}^t \Delta e^{-\nu(t-s) \Delta^2} f(w(s))ds.
\end{align*}
We bound the $\dot H^{1+}$-norm of $w$ as
\begin{align}
  \| w(t) \|_{\dot H^{1+}} & \lesssim \| |\nabla|^{1+} e^{-\nu (t-t_1)\Delta^2} w(t_1) \|_2
  + \int_{t_1}^t \| |\nabla|^{3+} e^{-\nu(t-s)\Delta^2} ( w(s)^3 -w(s) ) \|_2 ds \notag \\
  & \lesssim  ( \nu(t-t_1) )^{0-} \| w(t_1) \|_{\dot H^1}
  + \int_{t_1}^t ( \nu(t-s))^{-\frac 34-} ( \| w(s)\|_{\dot H^1}^3 + \| w(s) \|_{\dot H^1} ) ds \notag \\
  & \lesssim \; \nu^{-\frac 12-} \sqrt{E_0} + \nu^{-\frac 34-} \cdot \nu^{\frac 14-}
  \cdot \biggl( \left(\frac{E_0}{\nu}\right)^{\frac 32} + \left(\frac{E_0}{\nu}\right)^{\frac 12} \biggr) \notag \\
  & \lesssim \; \nu^{-1}\cdot \biggl( \left(\frac{E_0}{\nu}\right)^{\frac 32} + \left(\frac{E_0}{\nu}\right)^{\frac 12}
  \biggr).
  \end{align}
Then (recall that $w$ has mean zero)
\begin{align*}
\| w(t) \|_{\infty} & \lesssim 1+\| \nabla w(t) \|_2 \cdot \log ( 10+\|w(t)\|_{H^{1+}} ) \notag \\
& \lesssim \; 1+\sqrt{\frac{E_0}{\nu}} \cdot \Bigl( 1+ |\log \nu| + |\log E_0| \Bigr).
\end{align*}
This completes the proof of Proposition \ref{bc1}.
\end{proof}
\begin{rem}
By using the method in \cite{LQT14}, one can prove a well-posedness result for $w_0\in L^2(\mathbb T^2)$.
However we shall not need this refinement here.
\end{rem}
\begin{prop} \label{bc2}
Assume the initial data $w_0$ have mean zero and $w_0 \in H^s (\mathbb T^2)$, $s\ge 4$. Then
for any $0<\delta\le 1$,
\begin{align}
\int_0^T \| \partial_t \Delta w\|_2^2 dt \lesssim_{\delta, \nu, w_0} 1+T^{\delta}.
\end{align}

\end{prop}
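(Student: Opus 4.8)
The plan is to differentiate the equation in time and run a short energy cascade on $v:=\partial_t w$, gaining first one and then two derivatives starting from the $\dot H^{-1}$-integrability furnished by the energy identity. Since mass is conserved, $v$ has mean zero, so $\|\partial_t\Delta w\|_2=\|v\|_{\dot H^2}$ and it suffices to bound $\int_0^T\|v\|_{\dot H^2}^2\,dt$. Differentiating \eqref{1} shows that $v$ solves the linearized equation
\[
\partial_t v = -\nu\Delta^2 v + \Delta\bigl(f'(w)\,v\bigr),\qquad f'(w)=3w^2-1 .
\]
Two inputs feed the argument: the energy identity \eqref{intro_-10} gives $\int_0^\infty\|v\|_{\dot H^{-1}}^2\,dt=\int_0^\infty\||\nabla|^{-1}\partial_t w\|_2^2\,dt\le E_0$ (using $E\ge 0$), and Proposition \ref{bc1} gives a time-uniform bound $\sup_{t\ge0}\|w(t)\|_\infty\lesssim_{\nu,w_0}1$.

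First I would pair the linearized equation with $(-\Delta)^{-1}v$ in $L^2$. Using $\langle\Delta^2 v,(-\Delta)^{-1}v\rangle=\|\nabla v\|_2^2$ and $\langle\Delta(f'(w)v),(-\Delta)^{-1}v\rangle=-\int_\Omega f'(w)v^2$, together with $\int_\Omega w^2v^2\ge0$ and the interpolation $\|v\|_2^2\le\|v\|_{\dot H^{-1}}\|\nabla v\|_2$, this yields
\[
\tfrac12\tfrac{d}{dt}\|v\|_{\dot H^{-1}}^2+\tfrac{\nu}{2}\|\nabla v\|_2^2\le\tfrac{1}{2\nu}\|v\|_{\dot H^{-1}}^2 .
\]
Integrating on $[0,T]$ and invoking $\int_0^\infty\|v\|_{\dot H^{-1}}^2\le E_0$ gives $\int_0^T\|\nabla v\|_2^2\,dt\lesssim_{\nu,w_0}1$, provided the initial term $\|v(0)\|_{\dot H^{-1}}=\|\nabla\mu_0\|_2$ (with $\mu_0=-\nu\Delta w_0+f(w_0)$) is finite, which holds since $w_0\in H^4$.

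Next I would pair the linearized equation with $v$ itself, producing
\[
\tfrac12\tfrac{d}{dt}\|v\|_2^2+\nu\|\Delta v\|_2^2=\int_\Omega f'(w)\,v\,\Delta v .
\]
Bounding the right side by $(3\|w\|_\infty^2+1)\|v\|_2\|\Delta v\|_2$ and absorbing $\tfrac{\nu}{2}\|\Delta v\|_2^2$ via Young's inequality leaves $\tfrac{d}{dt}\|v\|_2^2+\nu\|\Delta v\|_2^2\lesssim_{\nu,w_0}\|v\|_2^2$, where the uniform $L^\infty$-bound on $w$ makes the coefficient a finite $(\nu,w_0)$-constant. Integrating and using $\int_0^T\|v\|_2^2\,dt\lesssim_{\nu,w_0}1$ — which follows from the previous step via $\|v\|_2^2\le\|v\|_{\dot H^{-1}}\|\nabla v\|_2$ and Cauchy--Schwarz in time (or Poincar\'e, since $v$ has mean zero) — together with the finiteness of $\|v(0)\|_2=\|{-\nu\Delta^2 w_0+\Delta f(w_0)}\|_2$, delivers $\int_0^T\|\Delta v\|_2^2\,dt\lesssim_{\nu,w_0}1\le C_{\nu,w_0}(1+T)$.

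These computations are formal in that they differentiate the PDE in time and integrate by parts as though $v$ were smooth; the rigorous version runs the identical estimates on the spectral-Galerkin (or mollified) approximations and passes to the limit, the uniform constants being justified by parabolic regularity. The step I expect to be the genuine obstacle is controlling the contributions at $t=0$: the quantities $\|v(0)\|_{\dot H^{-1}}$ and especially $\|v(0)\|_2=\|-\nu\Delta^2 w_0+\Delta f(w_0)\|_2$ must be finite, and this is exactly where the hypothesis $s\ge4$ enters, since it forces $\Delta^2 w_0\in L^2$, while $\Delta f(w_0)\in L^2$ follows from $w_0\in H^4\hookrightarrow C^2(\mathbb T^2)$. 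Everything else is a routine two-step energy cascade.
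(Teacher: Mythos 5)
Your argument is correct, and it takes a genuinely different route from the paper. The paper splits time into $[0,1]$ and $[1,T]$: on $[1,T]$ it invokes the smoothing bound $\sup_{t\gtrsim 1}\|\partial_t w\|_{H^{100}}\lesssim 1$ and interpolates against the dissipation $\int_0^\infty\||\nabla|^{-1}\partial_t w\|_2^2\,dt\lesssim 1$ to get $\int_1^\infty\|\Delta\partial_t w\|_2^4\,dt\lesssim 1$, hence $\int_1^T\|\Delta\partial_t w\|_2^2\,dt\lesssim 1+\sqrt{T}$ by H\"older in time; on $[0,1]$ it tests $\partial_t\Delta w=-\nu\Delta^3 w+\Delta^2 f(w)$ against $\partial_t\Delta w$ itself, absorbing the nonlinear term via $\|\Delta^2 f(w)\|_2\lesssim\|w\|_{H^4}^3+\|w\|_{H^4}$ and the $H^4$ wellposedness theory. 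You instead differentiate the equation in time and run a two-tier parabolic energy cascade on $v=\partial_t w$ (first in $\dot H^{-1}$, then in $L^2$), using only the energy dissipation and the $L^\infty$ bound of Proposition \ref{bc1}. Both arguments consume the hypothesis $s\ge 4$ in the same place in disguise — the paper through $\|w\|_{H^4}$, you through $\|v(0)\|_2=\|-\nu\Delta^2 w_0+\Delta f(w_0)\|_2$ — and your remark about justifying the formal time-differentiation is handled exactly as you say: run the estimates on $[\epsilon,T]$ where the solution is smooth and let $\epsilon\to 0$, using that $\partial_t w\in C([0,T];L^2)$ for $H^4$ data. What your route buys is a strictly stronger conclusion: since the Gronwall source $\int_0^T\|v\|_2^2\,dt$ is controlled by the $T$-uniform quantity $\int_0^\infty\|\nabla v\|_2^2\,dt$ from your first step, you actually obtain $\int_0^\infty\|\partial_t\Delta w\|_2^2\,dt\lesssim_{\nu,w_0}1$ with no growth in $T$ at all — something the paper's interpolation step cannot deliver, since $\int_1^\infty\|\Delta\partial_t w\|_2^4\,dt\lesssim 1$ only yields $\sqrt{T}$ growth for the square integral. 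The paper's version is shorter because it leans on the smoothing estimate as a black box; yours is more self-contained and quantitatively sharper.
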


\begin{proof}
To simplify the notation we shall write $\lesssim_{\delta, \nu,w_0}$ as $\lesssim $ throughout this proof.
We shall take $N$ to be a sufficiently large number (it will be clear from the argument
below that $\delta=O(1/N)$).
By using the smoothing effect, it is easy to show that
\begin{align}
\sup_{1\le t<\infty} \| \partial_t w\|_{H^{N}} \lesssim 1.
\end{align}
From energy conservation, we have
\begin{align}
\int_0^{\infty} \| |\nabla|^{-1} \partial_t w\|_2^2 dt \lesssim 1.
\end{align}
By using the interpolation inequality
\begin{align*}
\| \Delta \partial_t w\|_{2} \lesssim
\| |\nabla|^{-1} \partial_t w\|_2^{\frac{N-2}{N+1}}
 \| \langle \nabla \rangle^N \partial_t w \|_2^{\frac 3{N+1}},
 \end{align*}
we get
\begin{align}
\int_1^{\infty} \| \Delta \partial_t w\|_2^{\frac{2(N+1)}{N-2} } dt \lesssim 1.
\end{align}
This implies
\begin{align}
\int_1^{T} \|\Delta \partial_t w\|_2^2 dt \lesssim 1 +T^{\frac 3{N+1}}.
\end{align}
Now we only need to show
\begin{align}
\int_0^1 \| \Delta \partial_t w\|_2^2 dt \lesssim 1.
\end{align}
Observe
\begin{align*}
\partial_t \Delta w = -\nu \Delta^3 w + \Delta^2 f(w).
\end{align*}
Multiplying both sides by $\partial_t \Delta w$ and integrating by parts, we get
\begin{align*}
\| \partial_t \Delta w\|_2^2 = -\frac{\nu}2
\frac d {dt} ( \| \Delta^2 w\|_2^2 ) + \int_{\Omega} \Delta^2 (f(w)) \partial_t \Delta w dx.
\end{align*}
Thus
\begin{align}
\frac{\nu}2 \frac d {dt} ( \| \Delta^2 w\|_2^2 )
&\le - \| \partial_t \Delta w\|_2^2 + \| \Delta^2 f(w) \|_2 \cdot \| \partial_t \Delta w\|_2 \notag \\
& \le -\frac 12 \| \partial_t \Delta w\|_2^2 + \operatorname{const} \cdot
\Bigl( \| w\|_{H^4}^3 + \| w \|_{H^4} \Bigr).
\end{align}
From this (and standard $H^4$ global well-posedness theory), we get
\begin{align}
\int_0^1 \| \partial_t \Delta w\|_2^2 dt \lesssim 1.
\end{align}
The desired inequality then follows.
\end{proof}

\section{Error estimate for CH}
\setcounter{equation}{0}

In this section we give the estimate for $CH$ in $L^2$.
\subsection{Auxiliary $L^2$ error estimate for near solutions}
Consider
\begin{align} \label{ech_e4.1}
\begin{cases}
\displaystyle\frac{v^{n+1}-v^n}{\tau} = - \nu \Delta^2 v^{n+1} +A \Delta(v^{n+1}-v^n) +\Delta \Pi_N f(v^n) +\Delta \tilde
G_n^1, \quad n\ge 0,\\
\displaystyle\frac{\tilde v^{n+1} -\tilde v^n} {\tau} =-\nu \Delta^2 \tilde
v^{n+1} + A \Delta( \tilde v^{n+1}- \tilde v^n) + \Delta \Pi_N  f(\tilde
v^n)+
\Delta \tilde G^n_2, \qquad n\ge 0,\\
v^0=v_0, \quad \tilde v^0=\tilde v_0,
\end{cases}
\end{align}
where $v_0$ and $\tilde v_0$ have mean zero. Denote $\tilde
G^n=\tilde G^n_1-\tilde G^n_2$.

We first state and prove a simple lemma.

\begin{lem}[discrete Gronwall inequality] \label{lemy3}
Let $\tau>0$ and $y_n\ge 0$, $\tilde \alpha_n \ge 0$, $\tilde
\beta_n\ge 0$ for $n=0,1,2,\cdots$. Suppose
\begin{align*}
\frac{y_{n+1}-y_n}{\tau} \le \tilde \alpha_n y_n + \tilde \beta_n,
\quad\forall\, n\ge 0.
\end{align*}
Then for any $m \ge 1$, we have
\begin{align} \label{ech_tmp4.4}
y_m \le \exp\Bigl({\tau \sum_{n=0}^{m-1} \tilde \alpha_n}\Bigr) y_0 + \tau
\sum_{k=0}^{m-1} \exp\Bigl({\tau \sum_{j=k+1}^{m-1} \tilde \alpha_j }\Bigr) \tilde
\beta_k.
\end{align}
In particular
\begin{align}
y_m \le \exp\Bigl({\tau \sum_{n=0}^{m-1} \tilde \alpha_n }\Bigr) ( y_0 + \tau
\sum_{k=0}^{m-1} \tilde \beta_k ).
\end{align}
\end{lem}
\begin{proof}
Clearly
\begin{align*}
y_{n+1} \le (1+\tilde \alpha_n \tau) y_n +\tau \tilde \beta_n \le
e^{\tau \tilde \alpha_n} y_n + \tau \tilde \beta_n, \quad \forall\,
n\ge 0.
\end{align*}
Thus
\begin{align*}
\exp\Bigl({-\tau \sum_{j=0}^n \tilde \alpha_j} \Bigr) y_{n+1} \le \exp\Bigl({-\tau
\sum_{j=0}^{n-1} \tilde \alpha_j} \Bigr) y_n + \tau \exp \Bigl( {-\tau \sum_{j=0}^n
\tilde \alpha_j} \Bigr)\tilde \beta_n.
\end{align*}
Summing $n$ from $0$ to $m-1$, we get
\begin{align*}
\exp\Bigl({-\tau \sum_{j=0}^{m-1}  \tilde \alpha_j}\Bigr) y_{m} \le y_0 + \tau
\sum_{n=0}^{m-1} \exp\Bigl( {-\tau \sum_{j=0}^{n} \tilde \alpha_j} \Bigr) \tilde
\beta_n.
\end{align*}
Thus \eqref{ech_tmp4.4} is obtained.

\end{proof}

\begin{prop} \label{propy2}
For solutions of \eqref{ech_e4.1}, assume for some $N_1>0$, $N_2>0$,
\begin{align}
&\sup_{n\ge 0} \| \tilde v^n \|_{\infty} \le N_1,
\quad \sup_{n\ge 0} \| \nabla v^n \|_2 \le N_2,
 \quad
\sup_{n\ge 0} \| \nabla \tilde v^n \|_{2} \le N_2.
\end{align}
Then for any $m\ge 1$,
\begin{align}
& \| v^m -\tilde v^m \|_2^2 \notag \\
\le &\; \exp\Bigl({m\tau\cdot
\frac{C_1\cdot(1+N_1^4+N_2^4)}{\nu}}\Bigr)\cdot \Bigl( \|  v_0-\tilde
v_0\|_2^2 + A\tau \| \nabla (v_0-\tilde v_0)\|_2^2+
\frac{4\tau}{\nu} \sum_{n=0}^{m-1} \| \tilde G^n \|_2^2
\Bigr),
\end{align}
where $C_1>0$ is an absolute constant.
\end{prop}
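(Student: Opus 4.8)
The plan is to run a discrete energy estimate on the error $e^n := v^n-\tilde v^n$ and close with the discrete Gronwall inequality of Lemma \ref{lemy3}. Subtracting the two equations in \eqref{ech_e4.1} yields
\begin{align*}
\frac{e^{n+1}-e^n}{\tau} = -\nu\Delta^2 e^{n+1} + A\Delta(e^{n+1}-e^n) + \Delta\Pi_N\bigl(f(v^n)-f(\tilde v^n)\bigr) + \Delta\tilde G^n .
\end{align*}
I would take the $L^2$ inner product with $e^{n+1}$. Since all iterates lie in $X_N$ and $\Delta,\Pi_N$ are self-adjoint with $\Pi_N\Delta e^{n+1}=\Delta e^{n+1}$, the two source terms collapse to $\bigl(f(v^n)-f(\tilde v^n)+\tilde G^n,\,\Delta e^{n+1}\bigr)$. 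Applying the elementary identity $2b\cdot(b-a)=|b|^2-|a|^2+|b-a|^2$ to the time-difference and the stabilization term produces the energy identity
\begin{align*}
&\frac{1}{2\tau}\bigl(\|e^{n+1}\|_2^2-\|e^n\|_2^2+\|e^{n+1}-e^n\|_2^2\bigr) + \nu\|\Delta e^{n+1}\|_2^2 \\
&\qquad + \frac A2\bigl(\|\nabla e^{n+1}\|_2^2-\|\nabla e^n\|_2^2+\|\nabla(e^{n+1}-e^n)\|_2^2\bigr) = \bigl(f(v^n)-f(\tilde v^n)+\tilde G^n,\,\Delta e^{n+1}\bigr).
\end{align*}

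The next step is to dispose of the right-hand side. By Cauchy--Schwarz and Young's inequality it is bounded by $\frac{2}{\nu}\|f(v^n)-f(\tilde v^n)\|_2^2+\frac{2}{\nu}\|\tilde G^n\|_2^2+\frac{\nu}{4}\|\Delta e^{n+1}\|_2^2$, where the last term is absorbed into the dissipation $\nu\|\Delta e^{n+1}\|_2^2$. The heart of the argument, and the step I expect to be the main obstacle, is the nonlinear estimate. Writing $v^n=\tilde v^n+e^n$ and expanding $f(u)=u^3-u$ gives
\begin{align*}
f(v^n)-f(\tilde v^n) = 3(\tilde v^n)^2 e^n + 3\tilde v^n (e^n)^2 + (e^n)^3 - e^n .
\end{align*}
Naively the cubic term $(e^n)^3$ and the quadratic term $\tilde v^n(e^n)^2$ are controllable only through $\|\nabla e^n\|_2$, which is incompatible with a Gronwall inequality in $\|e^n\|_2^2$. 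I would resolve this by exploiting the a priori bound $\|\nabla e^n\|_2\le\|\nabla v^n\|_2+\|\nabla\tilde v^n\|_2\le 2N_2$ together with the $L^\infty$ control $\|\tilde v^n\|_\infty\le N_1$ and the sharp two-dimensional multiplicative inequalities $\|e^n\|_4^2\lesssim\|e^n\|_2\|\nabla e^n\|_2$ and $\|e^n\|_6^3\lesssim\|e^n\|_2\|\nabla e^n\|_2^2$ (valid since $e^n$ has mean zero). Trading every excess derivative for a factor of $N_1$ or $N_2$ while keeping exactly one factor of $\|e^n\|_2$ yields $\|f(v^n)-f(\tilde v^n)\|_2\lesssim(1+N_1^2+N_2^2)\|e^n\|_2$, hence $\frac{2}{\nu}\|f(v^n)-f(\tilde v^n)\|_2^2\lesssim\frac{1+N_1^4+N_2^4}{\nu}\|e^n\|_2^2$.

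Finally I would drop the nonnegative terms $\frac{1}{2\tau}\|e^{n+1}-e^n\|_2^2$, $\frac{3\nu}{4}\|\Delta e^{n+1}\|_2^2$ and $\frac A2\|\nabla(e^{n+1}-e^n)\|_2^2$ and multiply through by $2\tau$. Setting $y_n:=\|e^n\|_2^2+A\tau\|\nabla e^n\|_2^2$ and using $\|e^n\|_2^2\le y_n$ gives
\begin{align*}
\frac{y_{n+1}-y_n}{\tau}\le\frac{C(1+N_1^4+N_2^4)}{\nu}\,y_n+\frac{4}{\nu}\|\tilde G^n\|_2^2 .
\end{align*}
Applying Lemma \ref{lemy3} with constant rate $\tilde\alpha_n=\frac{C(1+N_1^4+N_2^4)}{\nu}$ and source $\tilde\beta_n=\frac{4}{\nu}\|\tilde G^n\|_2^2$ yields the asserted bound for $y_m$; since $\|v^m-\tilde v^m\|_2^2=\|e^m\|_2^2\le y_m$, the proposition follows with $C_1=C$. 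The energy identity and the Gronwall application are routine; only the nonlinear estimate requires care.
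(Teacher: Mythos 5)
Your proof is correct and follows essentially the same route as the paper's: the same $L^2$ energy identity obtained by testing against $e^{n+1}$, the same cubic decomposition of $f(v^n)-f(\tilde v^n)$ controlled via the two-dimensional interpolation inequalities $\|e\|_4^2\lesssim\|e\|_2\|\nabla e\|_2$ and $\|e\|_6^3\lesssim\|e\|_2\|\nabla e\|_2^2$, and the same Gronwall quantity $y_n=\|e^n\|_2^2+A\tau\|\nabla e^n\|_2^2$. The only cosmetic difference is that you discard $\Pi_N$ by asserting all iterates lie in $X_N$ (which need not hold for $\tilde v^n$ in the intended application to the PDE solution), but since $\Pi_N$ is an $L^2$-orthogonal projection commuting with $\Delta$, the bound $|(f(v^n)-f(\tilde v^n),\Delta\Pi_N e^{n+1})|\le\|f(v^n)-f(\tilde v^n)\|_2\,\|\Delta e^{n+1}\|_2$ holds regardless and the argument is unaffected.
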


\begin{rem}
The same proposition holds if $\Pi_N$ is replaced by the identity operator.
\end{rem}

\subsection*{Proof of Proposition \ref{propy2}}
Denote $e^n= v^n - \tilde v^n$. Then
\begin{align}
\frac{e^{n+1}-e^n}{\tau} = -\nu \Delta^2 e^{n+1}
+A\Delta(e^{n+1}-e^n) +\Delta \Pi_N(f(v^n)-f(\tilde v^n)) +\Delta \tilde
G^n.
\end{align}

Taking the $L^2$ inner product with $ e^{n+1}$ on both sides, we get
\begin{align}
 & \frac 1 {2\tau} ( \| e^{n+1} \|_2^2 -
 \|  e^n \|_2^2 + \| e^{n+1}-e^n\|_2^2 ) \notag \\
 & \qquad + \nu \|\Delta e^{n+1} \|_2^2 + \frac A 2
 ( \| \nabla e^{n+1}\|_2^2 - \| \nabla e^n \|_2^2 + \| \nabla (e^{n+1}-e^n) \|_2^2 ) \notag \\
 =& \; (\tilde G^n, \Delta e^{n+1})
  + (f(v^n)-f(\tilde v^n), \Delta \Pi_N e^{n+1} ). \label{yy30}
 \end{align}

Obviously
\begin{align*}
|(\tilde G^n, \Delta e^{n+1})|\le \frac{ 2\| \tilde G^n\|_2^2} {\nu} + \frac{\nu} 8 \| \Delta e^{n+1} \|_2^2.
\end{align*}

On the other hand, recalling $f^{\prime}(z)=3z^2-1$, we get
\begin{align*}
f(v^n) -f (\tilde v^n) & = \int_0^1 f^{\prime}(\tilde v^n +se^n) ds e^n \notag \\
& = (a_1+a_2 (\tilde v^n)^2 )e^n + a_3 \tilde v^n (e^n)^2+ a_4 (e^n)^3,
\end{align*}
where $a_i$, $i=1,\cdots,4$ are constants which  can be computed explicitly.

We now estimate the contribution of each term. In the rest of this proof, to ease the notation,
we shall denote by $C$ an absolute constant whose value may change from line to line.
Clearly
\begin{align}
&|((a_1+a_2 (\tilde v^n)^2)e^n, \Delta e^{n+1})| \notag \\
 \le &\; C
\cdot (1+\|\tilde v^n\|_{\infty}^2) \| e^n\|_2 \cdot \|\Delta e^{n+1}\|_2
\le \frac{C (1+N_1^4)}{\nu} \|e^n\|_2^2 + \frac{\nu} 8 \| \Delta e^{n+1}\|_2^2.
\end{align}
By using the interpolation inequality $\|f\|_4 \lesssim \| f\|_2^{\frac 12} \| \nabla f\|_2^{\frac 12}$, we
get
\begin{align}
&|(a_3 \tilde v^n (e^n)^2, \Delta e^{n+1})| \notag \\
\le &\; C\|\tilde v^n\|_{\infty} \cdot \|e^n\|_4^2 \cdot \|\Delta e^{n+1}\|_2
\le\; C \cdot \frac{N_1^2 \cdot \|e^n\|_4^4}{\nu} + \frac{\nu} 8 \|\Delta e^{n+1}\|_2^2 \notag \\
\le& \; C \cdot \frac{N_1^2}{\nu} \|\nabla e^n\|_2^2 \| e^n\|_2^2 + \frac{\nu} 8 \| \Delta e^{n+1}\|_2^2
 \le C \frac{N_1^2 N_2^2}{\nu} \|e^n\|_2^2 + \frac{\nu} 8 \| \Delta e^{n+1}\|_2^2.
\end{align}

Similarly
\begin{align}
&|(a_4 (e^n)^3, \Delta e^{n+1} ) | \notag \\
 \le & \; \frac C {\nu} \| e^n\|_6^6+ \frac{\nu} 8 \| \Delta e^{n+1}\|_2^2
 \le \frac C {\nu} \| e^n\|_2^2 \| \nabla e^n\|_2^4 + \frac{\nu} 8 \| \Delta e^{n+1}\|_2^2
\le C \frac{N_2^4}{\nu} \|e^n\|_2^2 + \frac{\nu} 8 \| \Delta e^{n+1}\|_2^2.
\end{align}

Collecting the estimates, we get
\begin{align}
&\frac{\|e^{n+1}\|_2^2 - \| e^n\|_2^2}{\tau} +  A  ( \| \nabla e^{n+1}\|_2^2 -\|\nabla e^n\|_2^2) \notag \\
\le&\; \frac {4}\nu \|\tilde G^n\|_2^2 + C \frac{1+N_1^4+N_2^4}{\nu} \|e^n\|_2^2.
\end{align}

Define
\begin{align*}
y_n =\|e^n\|_2^2 + A\tau \|\nabla e^n\|_2^2, \quad \tilde \alpha= C \frac{1+N_1^4+N_2^4} {\nu},
\quad \tilde \beta_n= \frac{4}{\nu}\|\tilde G^n\|_2^2.
\end{align*}

Then obviously
\begin{align*}
\frac{y_{n+1}-y_n}{\tau} \le \tilde \alpha y_n +\tilde \beta_n.
\end{align*}

The desired result then follows from Lemma \ref{lemy3}.
\qed

\subsection{$L^2$ error estimate for CH (proof of Theorem \ref{thm_CH_L2})}
In this proof to simplify the notation, we shall denote by $C$  a constant depending only
on $(\nu,u_0)$. The value of $C$ may vary from line to line. For any two quantities
$X$ and $Y$, we shall write $X\lesssim Y$ if $X\le CY$. Note that we shall still keep track of
the dependence on the parameter $A$ and also the regularity index $s$.

We need to consider
\begin{align}
\begin{cases}
\displaystyle\frac{ u^{n+1}- u^n} {\tau}
= - \nu \Delta^2  u^{n+1} + A \Delta ( u^{n+1}- u^n)
+\Delta \Pi_N f( u^n), \\
\partial_t u = -\nu \Delta^2 u + \Delta f(u), \\
\tilde u^0 =\Pi_N u_0, \quad u(0)=u_0.
\end{cases}
\end{align}

We first rewrite the PDE solution $u$ in the discretized form.
Note that for a one-variable function $h=h(t)$, we have the formula
\begin{align}
\frac 1 {\tau} \int_{t_n}^{t_{n+1}} h(t) dt & = h(t_n) +
\frac 1 {\tau} \int_{t_n}^{t_{n+1}} h^{\prime}(t) \cdot(t_{n+1}-t) dt, \\
\frac 1 {\tau} \int_{t_n}^{t_{n+1}} h(t) dt & = h(t_{n+1} ) + \frac 1 {\tau} \int_{t_n}^{t_{n+1}} h^{\prime}(t) \cdot(t_{n}-t) dt.
\end{align}

By using the above formula and integrating the PDE for $u$ on the time interval $[t_n,t_{n+1}]$, we
get
\begin{align}
&\frac{u(t_{n+1})-u(t_n)} {\tau} \notag \\
= &\;-\nu \Delta^2 u(t_{n+1}) +A \Delta( u(t_{n+1}) -u(t_n) ) +\Delta \Pi_N  f(u(t_n)) +
\Delta \Pi_{>N} f(u(t_n))+
\Delta
\tilde G^n,
\end{align}
where  $\Pi_{>N}=\operatorname{Id}-\Pi_N$ ($\operatorname{Id}$ is the identity operator) and
\begin{align}
\tilde G^n = -\frac{\nu}{\tau} \int_{t_n}^{t_{n+1}}
\partial_t \Delta u \cdot (t_n-t) dt + \frac 1 {\tau} \int_{t_n}^{t_{n+1}}
\partial_t (f(u)) \cdot(t_{n+1}-t) dt - A \int_{t_n}^{t_{n+1}} \partial_t u dt.
\end{align}
Now
\begin{align}
\| \tilde G^n\|_2
\le \nu \int_{t_n}^{t_{n+1}}
\| \partial_t \Delta u\|_2 dt
+ \int_{t_n}^{t_{n+1}}
\| \partial_t u \|_2 dt (\|f^{\prime}(u)\|_{L_t^{\infty} L_x^{\infty} } +A).
\end{align}
By  Proposition \ref{bc1}, we have $\|u\|_{\infty} \lesssim 1$. Since
$\|\partial_t u \|_2 \lesssim \| \Delta \partial_t u\|_2$ (recall $\partial_t u$ has mean zero), we get
\begin{align*}
\| \tilde G^n\|_2 & \lesssim  (1+A)\int_{t_n}^{t_{n+1}} \| \partial_t \Delta u \|_2 dt \notag \\
& \lesssim (1+A) \cdot \left( \int_{t_n}^{t_{n+1}} \|\partial_t \Delta u\|_2^2 dt \right)^{\frac 12} \cdot \sqrt{\tau}.
\end{align*}
Therefore by Proposition \ref{bc2},
\begin{align*}
\sum_{n=0}^{m-1} \| \tilde G^n\|_2^2
& \lesssim  (1+A)^2 \tau \int_0^{t_m} \| \partial_t \Delta u\|_2^2 dt \lesssim
(1+A)^2\tau \cdot (1+t_m).
\end{align*}
It is easy to check that
\begin{align*}
\sup_{t\ge 0} \| u(t)\|_{H^s} \lesssim_s 1
\end{align*}
which implies
\begin{align}
\sup_{t\ge 0}\| f(u(t) )\|_{H^s} \lesssim_s 1.
\end{align}
This gives
\begin{align}
\sum_{n=0}^{m-1} \| \Pi_{>N} f(u(t_n))\|_2^2 \lesssim_s \; N^{-2s} t_m /\tau.
\end{align}
Therefore
\begin{align}
\tau \sum_{n=0}^{m-1}(\|\tilde G^n\|_2^2 + \| \Pi_{>N} f(u(t_n) )\|_2^2)
\lesssim_s (1+t_m) (\tau^2+ N^{-2s}) (1+A)^2.
\end{align}
Note that
\begin{align*}
\| u_0-\Pi_N u_0\|_2 \lesssim_s N^{-s}, \quad \| \nabla u_0 -\nabla \Pi_N u_0\|_2 \lesssim_s N^{-(s-1)}.
\end{align*}
By Theorem \ref{thm_es_1}, we have
\begin{align*}
\sup_{n\ge 0} \| \nabla u^n \|_2 \lesssim 1.
\end{align*}
Also recall the PDE solution $\sup_{n\ge 0} \|u(t_n)\|_{H^s} \lesssim 1$. Thus
by Proposition \ref{propy2}, we  get
\begin{align*}
\|  u^m -u(t_m) \|_2^2 \lesssim_s (1+A)^2e^{Ct_m}\biggl(
 N^{-2s} + \tau \cdot N^{-2(s-1)}+(1+t_m) (\tau^2+N^{-2s}) \biggr).
\end{align*}
Since by assumption we have $s\ge 4$, clearly by the Cauchy-Schwartz inequality
\begin{align*}
\tau \cdot N^{-2(s-1)} \lesssim \tau^2 + N^{-4(s-1)} \lesssim \tau^2 +N^{-2s}.
\end{align*}
This implies
\begin{align*}
\| u^m-u(t_m) \|_2 \lesssim_s (1+A)e^{Ct_m}( N^{-s} +\tau).
\end{align*}
\begin{rem}
From the above analysis, it is clear that our regularity assumption $H^s$, $s\ge 4$, on the initial data comes
from bounding the term
\begin{align*}
\int \| \partial_t \Delta u\|_2^2 dt
\end{align*}
which in turn arose from rewriting the diffusion term $-\nu \Delta^2 u$ into the time-discretized form.
Recall $\partial_t u = -\nu \Delta^2 u+ \Delta (f(u))$. For $0<t\ll 1$, the linear effect is dominant and one
can roughly regard $\partial_t u \sim \Delta^2 P_{<t^{-\frac 14}} u$, where $P_{<t^{-\frac 14}}$ is the Littlewood-Paley
projection to the frequency regime $|\xi| \lesssim t^{-\frac 14}$. Heuristically speaking
\begin{align*}
\| \partial_t \Delta u \|_2^2 \sim (t^{-\frac 12} \| P_{<t^{-\frac 14}} \Delta^2 u \|_2)^2 \sim t^{-1}
\| P_{<t^{-\frac 14}} \Delta^2 u \|_2^2
\end{align*}
which is barely non-integrable in $t$, provided we assume $H^4$ regularity on $u$. Of course a well-known technique in these situations is to use the maximal
regularity estimates of the linear semigroup to get integrability in $t$. In the $L^2$ case the usual energy estimate suffices and this
is why we need $H^4$ regularity on the initial data.

\end{rem}

\section{Error estimate for MBE}
\setcounter{equation}{0}

\subsection{Auxiliary $H^1$ estimate for MBE}
For MBE we need to consider
\begin{align}
\begin{cases}
\displaystyle\frac{q^{n+1}-q^n}{\tau}
= - \nu \Delta^2 q^{n+1} +A \Delta (q^{n+1}-q^n) + \nabla \cdot \Pi_N (g(\nabla q^n)) +\Delta \tilde G^n_1, \\
\displaystyle\frac{\tilde q^{n+1}-\tilde q^n} {\tau}
=-\nu \Delta^2 \tilde q^{n+1}
+A \Delta(\tilde q^{n+1}-\tilde q^n) + \nabla  \cdot \Pi_N (g (\nabla \tilde q^n))
+\Delta \tilde G^n_2, \\
q^0=q_0, \quad \tilde q^0 =\tilde q_0,
\end{cases}
\end{align}
where we recall $g(z)=(|z|^2-1) z$ for $z\in \mathbb R^2$. As before $q_0$ and $\tilde q_0$ are assumed
to have mean zero. Denote $\tilde G^n=\tilde G^n_1-\tilde G^n_2$.

\begin{prop} \label{propy2_MBE}
Assume for some $N_1>0$
\begin{align}
\sup_{n\ge 0} \Bigl(\| \nabla \tilde q^n \|_{\infty} + \|\Delta \tilde q^n\|_2+\|\Delta q^n\|_2 \Bigr) \le N_1.
\end{align}
Then for any $m\ge 1$,
\begin{align}
\| \nabla(q^n -\tilde q^n) \|_2^2
\le e^{m\tau \cdot \frac{C_1\cdot(1+N_1^4)}{\nu}}
\Bigl( \|\nabla( q^0-\tilde q^0)\|_2^2 + A\tau \| \Delta(q^0-\tilde q^0)\|_2^2+
\frac{2\tau}{\nu} \sum_{n=0}^{m-1}
\| \nabla \tilde G^n \|_2^2 \Bigr),
\end{align}
where $C_1>0$ is an absolute constant.
\end{prop}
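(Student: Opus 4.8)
The plan is to reproduce the structure of the proof of Proposition \ref{propy2}, but to pair against $-\Delta e^{n+1}$ instead of $e^{n+1}$ so that the gradient norm appears on the left. Set $e^n=q^n-\tilde q^n$ and $\tilde G^n=\tilde G^n_1-\tilde G^n_2$; subtracting the two schemes gives
\begin{align*}
\frac{e^{n+1}-e^n}{\tau} = -\nu\Delta^2 e^{n+1} + A\Delta(e^{n+1}-e^n) + \nabla\cdot\Pi_N\bigl(g(\nabla q^n)-g(\nabla\tilde q^n)\bigr) + \Delta\tilde G^n.
\end{align*}
Taking the $L^2$ inner product with $-\Delta e^{n+1}$, the time-difference term becomes $\tfrac1{2\tau}(\|\nabla e^{n+1}\|_2^2-\|\nabla e^n\|_2^2+\|\nabla(e^{n+1}-e^n)\|_2^2)$ by \eqref{aba_1}; the diffusion term yields the good dissipation $\nu\|\nabla\Delta e^{n+1}\|_2^2$; and the stabilization term yields, again via \eqref{aba_1}, $\tfrac A2(\|\Delta e^{n+1}\|_2^2-\|\Delta e^n\|_2^2+\|\Delta(e^{n+1}-e^n)\|_2^2)$. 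This is exactly the algebra that builds the Lyapunov quantity $y_n=\|\nabla e^n\|_2^2+A\tau\|\Delta e^n\|_2^2$ and accounts for the initial term $A\tau\|\Delta(q^0-\tilde q^0)\|_2^2$ in the statement.

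For the forcing term I would integrate by parts once to write $(\Delta\tilde G^n,-\Delta e^{n+1})=(\nabla\tilde G^n,\nabla\Delta e^{n+1})$, then apply Young's inequality in the form $\tfrac1\nu\|\nabla\tilde G^n\|_2^2+\tfrac{\nu}{4}\|\nabla\Delta e^{n+1}\|_2^2$, the second piece being absorbed into the dissipation; this produces the coefficient $\tfrac{2\tau}{\nu}$ (after clearing the $\tfrac12$'s) in the conclusion. The heart of the proof is the nonlinear term. Since each component of $\nabla\Delta e^{n+1}$ lies in $X_N$, the projection is harmless: $(\nabla\cdot\Pi_N\delta g^n,-\Delta e^{n+1})=(\delta g^n,\nabla\Delta e^{n+1})$ where $\delta g^n=g(\nabla q^n)-g(\nabla\tilde q^n)$. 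Writing $\delta g^n=\int_0^1 Dg(\nabla\tilde q^n+s\nabla e^n)\,ds\,\nabla e^n$ and using $Dg(z)_{ij}=2z_iz_j+(|z|^2-1)\delta_{ij}$, which gives $|Dg(z)|\lesssim 1+|z|^2$, I obtain the pointwise bound $|\delta g^n|\lesssim(1+|\nabla\tilde q^n|^2+|\nabla e^n|^2)|\nabla e^n|$; this splits into linear, quadratic and cubic contributions in $\nabla e^n$, the precise analogue of the decomposition of $f(v^n)-f(\tilde v^n)$ in Proposition \ref{propy2}.

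Each piece would then be controlled in $L^2$ and Young-ed against $\tfrac\nu4\|\nabla\Delta e^{n+1}\|_2^2$. The linear term is bounded by $(1+N_1^2)\|\nabla e^n\|_2$ using $\|\nabla\tilde q^n\|_\infty\le N_1$. The quadratic and cubic terms are handled through the two-dimensional Gagliardo--Nirenberg inequalities $\|\nabla e^n\|_4^2\lesssim\|\nabla e^n\|_2\|\Delta e^n\|_2$ and $\|\nabla e^n\|_6^6\lesssim\|\nabla e^n\|_2^2\|\Delta e^n\|_2^4$, together with the a priori bound $\|\Delta e^n\|_2\le\|\Delta q^n\|_2+\|\Delta\tilde q^n\|_2\le2N_1$ supplied by the hypothesis. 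I expect the cubic term to be the main obstacle, and it is the reason the hypotheses are phrased with the $L^2$-bounds on $\Delta q^n,\Delta\tilde q^n$ rather than a mere $L^\infty$ gradient bound: one must control $\||\nabla e^n|^3\|_2=\|\nabla e^n\|_6^3$ linearly in $\|\nabla e^n\|_2^2$, and the exponent in $\|\nabla e^n\|_6^6\lesssim\|\nabla e^n\|_2^2\|\Delta e^n\|_2^4$ is precisely what lets the two surplus gradient powers be absorbed into $N_1^4$, leaving a contribution $\tfrac{CN_1^4}{\nu}\|\nabla e^n\|_2^2$.

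Collecting the linear, quadratic, cubic and forcing estimates and discarding the nonnegative difference-squared terms leaves
\begin{align*}
\frac{y_{n+1}-y_n}{\tau}\le \frac{C(1+N_1^4)}{\nu}\,y_n+\frac{2}{\nu}\|\nabla\tilde G^n\|_2^2,\qquad y_n=\|\nabla e^n\|_2^2+A\tau\|\Delta e^n\|_2^2 .
\end{align*}
Applying the discrete Gronwall inequality (Lemma \ref{lemy3}) with $\tilde\alpha=\tfrac{C(1+N_1^4)}{\nu}$ and $\tilde\beta_n=\tfrac2\nu\|\nabla\tilde G^n\|_2^2$ then yields the stated bound, the conclusion following by dropping the nonnegative term $A\tau\|\Delta e^m\|_2^2$ from $y_m$.
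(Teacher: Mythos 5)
Your proposal is correct and follows essentially the same route as the paper: pairing with $-\Delta e^{n+1}$, absorbing the forcing and nonlinear terms into the dissipation $\nu\|\nabla\Delta e^{n+1}\|_2^2$ via Young's inequality, controlling the quadratic and cubic pieces with the 2D Gagliardo--Nirenberg inequalities and the hypothesis $\|\Delta e^n\|_2\le 2N_1$, and closing with the discrete Gronwall lemma applied to $y_n=\|\nabla e^n\|_2^2+A\tau\|\Delta e^n\|_2^2$. The only cosmetic difference is that you bound $\delta g^n$ pointwise through $Dg$ while the paper writes the same linear/quadratic/cubic decomposition in $O(\cdot)$ notation; the estimates are identical.
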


\begin{proof}
Denote $e^n = q^n -\tilde q^n$. Then
\begin{align*}
\frac{e^{n+1}-e^n}{\tau}
= - \nu \Delta^2 e^{n+1}
+A \Delta( e^{n+1}-e^n) + \nabla \cdot \Pi_N ( g(\nabla q^n) -g(\nabla \tilde q^n) ) +\Delta \tilde G^n.
\end{align*}

Taking the $L^2$ inner product with $(-\Delta) e^{n+1}$ on both sides, we get
\begin{align}
&\frac 1 {2\tau} ( \| \nabla e^{n+1}\|_2^2 -\| \nabla e^n\|_2^2 +
\|\nabla( e^{n+1}-e^n )\|_2^2 ) + \nu \| \Delta \nabla e^{n+1} \|_2^2 \notag \\
& \quad + \frac A 2 ( \| \Delta e^{n+1} \|_2^2 - \| \Delta e^n \|_2^2 + \| \Delta (e^{n+1}-e^n) \|_2^2 )
\notag \\
= & \quad \underbrace{ (\nabla \tilde G^n, \Delta \nabla e^{n+1})}_{I_1}
 +\underbrace{(g(\nabla q^n) -g(\nabla \tilde q^n), \, \nabla \Delta \Pi_N e^{n+1} )}_{I_2}. \label{yMBE31}
\end{align}

For the first term on the RHS of \eqref{yMBE31}, we simply bound it as
\begin{align}
|I_1 | \le \frac 1 {\nu} \| \nabla \tilde G^n \|_2^2 + \frac {\nu} 4 \| \Delta \nabla e^{n+1} \|_2^2.
\end{align}

For the second term $I_2$, recalling $g(z)=(|z|^2-1)z$, we have
\begin{align*}
g(\nabla q^n) -g(\nabla \tilde q^n) = O (\partial e^n) + O((\partial \tilde q^n)^2 \cdot \partial e^n)
+O( (\partial \tilde q^n) \cdot (\partial e^n)^2 ) + O( (\partial e^n)^3).
\end{align*}
Then
\begin{align}
\| g(\nabla q^n) -g(\nabla \tilde q^n) \|_2 & \lesssim (1+N_1^2) \| \nabla e^n\|_2
+ N_1 \| \nabla e^n\|_4^2 + \| \nabla e^n\|_6^3 \notag \\
& \lesssim (1+N_1^2) \| \nabla e^n\|_2 + N_1 \|\nabla e^n\|_2 \|\Delta e^n\|_2 + \|\nabla e^n\|_2 \| \Delta e^n\|_2^2
\notag \\
& \lesssim (1+N_1^2) \| \nabla e^n\|_2.
\end{align}
Thus
\begin{align}
|I_2| \le C \cdot \frac{1+N_1^4}{\nu} \| \nabla e^n\|_2^2 + \frac{\nu}2 \| \Delta \nabla e^{n+1}\|_2^2.
\end{align}
We then obtain
\begin{align}
 &\frac{\|\nabla e^{n+1}\|_2^2 -\| \nabla e^n\|_2^2}{\tau} +A \Bigl(\| \Delta e^{n+1}\|_2^2 -\| \Delta e^n\|_2^2\Bigr)
 \notag \\
\le&\; C \cdot \frac{1+N_1^4}{\nu} \| \nabla e^n \|_2^2 + \frac {2} {\nu} \| \nabla \tilde G^n\|_2^2.
\end{align}
The desired result then follows from Lemma \ref{lemy3}.
\end{proof}

\subsection{Proof of Theorem \ref{thm_MBE_L2}}
Similarly to the proof of Theorem \ref{thm_CH_L2}, we need to consider
\begin{align*}
\begin{cases}
\displaystyle\frac{ h^{n+1} - h^n} {\tau}
= - \nu \Delta^2  h^{n+1}
+ A \Delta ( h^{n+1}- h^n) + \nabla \cdot \Pi_N (g(\nabla  h^n) ), \\
\partial_t h = -\nu \Delta^2 h+ \nabla \cdot ( g(\nabla h) ), \\
 h^0=\Pi_N h_0, \quad h(0)=h_0.
\end{cases}
\end{align*}
On the time interval $[t_n,t_{n+1}]$, we have
\begin{align}
\frac{h(t_{n+1})-h(t_n)} {\tau}
&= - \nu \Delta^2 h(t_{n+1})
+ A \Delta (h(t_{n+1})-h(t_n) ) +\nabla \cdot \Pi_N ( g(\nabla h(t_n) )) \notag \\
& \qquad \qquad + \nabla \cdot \Pi_{>N} ( g(\nabla h(t_n) ) ) +\Delta \tilde G^n,
\end{align}
where
\begin{align*}
\tilde G^n = -\frac{\nu}{\tau}
\int_{t_n}^{t_{n+1}}
\partial_t \Delta h \cdot (t_n-t) dt
+ \frac 1 {\tau} \int_{t_n}^{t_{n+1}} \Delta^{-1} \partial_t \nabla \cdot ( g(\nabla h(t) ) ) \cdot (t_{n+1}-t) dt
- A \int_{t_n}^{t_{n+1}} \partial_t hdt.
\end{align*}
Now we only need to verify the estimates:
\begin{align}
& \int_0^T \|\partial_t \nabla \Delta h\|_2^2 dt \lesssim_{\nu,h_0} 1+T, \\
&  \int_0^T \| \Delta^{-1} \nabla \partial_t \nabla \cdot ( g(\nabla h) ) \|_2^2 dt \lesssim_{\nu,h_0} 1+T.
\end{align}
Recall
\begin{align*}
 \partial_t h=-\nu \Delta^2 h +  \nabla \cdot (g(\nabla h)).
\end{align*}
Multiplying both sides by $-\Delta^3 \partial_t h$ and integrating by parts, we get
\begin{align*}
 \| \Delta \nabla \partial_t h \|_2^2 = - \frac{\nu}2 \frac d {dt}( \| \Delta^2\nabla h \|_2^2)
 + \int \Delta \nabla \nabla \cdot (g(\nabla h) ) \cdot \Delta \nabla \partial_t h dx,
 \end{align*}
 and
\begin{align}
\frac {\nu}2 \frac d {dt}
\| \Delta^2  \nabla h\|_2^2
& \le - \| \partial_t \Delta \nabla  h\|_2^2 + \| \Delta \nabla \nabla \cdot (g(\nabla h) ) \|_2 \cdot
\| \partial_t \Delta\nabla  h\|_2 \notag \\
& \le -\frac 12 \| \partial_t \Delta\nabla  h\|_2^2
+ \operatorname{const} \cdot (\|h\|_{H^5}^3 + \| h\|_{H^5}).
\end{align}
This (together with standard local well-posedness theory; cf. \cite{LQT14}
for more refined results) yields
\begin{align*}
\int_0^1 \| \partial_t \Delta \nabla  h\|_2^2 dt \lesssim_{\nu,h_0} 1.
\end{align*}
The smoothing effect gives control for $t\ge 1$. Thus
\begin{align*}
\int_0^T \| \partial_t \Delta \nabla h\|_2^2 dt \lesssim_{\nu,h_0} 1+T.
\end{align*}
For the term $\| \Delta^{-1} \nabla \partial_t \nabla \cdot (g(\nabla h) ) \|_2$, we note that
\begin{align}
 &\| \Delta^{-1} \nabla \nabla \cdot ( \partial_t (g(\nabla h) ) ) \|_2 \notag \\
 \lesssim &\; \| \partial_t ( |\nabla h|^2 \nabla h -\nabla h ) \|_2
\lesssim  ( \| \nabla h \|_{\infty}^2 +1) \| \nabla \partial_t h\|_2
 \lesssim_{\nu,h_0} \| \nabla \partial_t h\|_2.
\end{align}
Thus
\begin{align}
\int_0^T \| \Delta^{-1} \nabla  \partial_t \nabla \cdot (g (\nabla h) ) \|_2^2 dt \lesssim_{\nu,h_0} 1+T.
\end{align}
Finally we get
\begin{align*}
\| \nabla(h(t_m) -\tilde h^m) \|_2 \lesssim (1+A) e^{Ct_m} \cdot (N^{-(s-1)} + \tau).
\end{align*}
The theorem is proved.

\section{Concluding remarks}
\setcounter{equation}{0}

In this work we considered a class of large time-stepping methods for the phase field models such
as the CH equation  and the thin film equation with fourth order dissipation. We analyzed
the representative case (see \eqref{semi_e1} and \eqref{semi_e2}) which is first order in time and
Fourier spectral in space, with a stabilization $O(\Delta t)$ term of the form
\begin{align} \label{conr_e1}
A\Delta(u^{n+1}-u^n).
\end{align}
For $A$ sufficiently large ($A\ge O(\nu^{-1} |\log \nu|^2)$), we proved unconditional energy stability
independent of the time step. The corresponding error analysis is also carried out in full detail
($L^2$ for CH and $H^1$ for MBE). It is worth emphasizing that our analysis does not require any additional
Lipschitz assumption on the nonlinearity, or any a priori bounds on the numerical solution.
It is expected our theoretical framework can be extended in several directions. We discuss a few such
possibilities below the fold.

\begin{itemize}

\item General stabilization techniques. There are a myriad of ways of introducing the stabilization term.
Taking the first order in time methods as an example, instead
of \eqref{conr_e1}, one can consider a more general form
\begin{align}
A B(u^{n+1}-u^n),
\end{align}
where $B$ is a general operator. One example is $B=-\Delta^2$ which is already used in the aforementioned
works \cite{ZCST99,BJL11}. Similarly one can consider $B= -(-\Delta)^s$ ($s>0$ is real) or even a general
pseudo differential operator. It will be interesting to carry out a comparative study of these different stabilization
techniques and identify the corresponding stability regions. Another issue is to investigate the lower
bound on the parameter $A$. In typical numerical simulations the stability is observed to hold for relatively
small values of $A$ (the threshold value exhibits a weak dependence on the time step $\tau$ and the diffusion
coefficient $\nu$; cf. the numerical simulation
results in \cite{HLT07}). This certainly
merits further study and probably one has to fine-tune our analysis with some numerically verifiable bounds.

\item Higher order time-stepping methods. In \cite{XT06}, Xu and Tang considered a second order scheme
for MBE:
\begin{align}
 &\frac{3h^{n+1}-4 h^n + h^{n-1}} {2\tau} + \nu \Delta^2 h^{n+1} \notag \\
 =&\; A \Delta( h^{n+1} -2 h^n+ h^{n-1}) + \nabla \cdot \Pi_N g(\nabla(2h^n-h^{n-1}) ),\quad n\ge 1,
\end{align}
where $h^0$ is the initial condition and $h^1$ is computed by the first order scheme \eqref{semi_e2}.
Here to keep some consistency with our setup we
have added the projection operator $\Pi_N$ in front of the nonlinear term.
This scheme is called BD2/EP2 since it is obtained by combining a second order backward
differentiation (BD2) for the time derivative term and a second order extrapolation
(EP2) for the explicit treatment of the nonlinear term. A similar higher order BD3/EP3 scheme is
also presented in \cite{XT06}. The stability analysis in
\cite{XT06} is conditional in the sense that the choice of $A$ depends on the a priori gradient bound on the numerical
solution. Moreover, quite different from the first order (in time) methods, the energy stability for
higher order methods typically takes
the form
\begin{align}
E(h^n) \le E(h^0) + O(\tau), \qquad n \tau \le T,
\end{align}
where the implied constant in the $O(\tau)$ term usually depends on the time interval $[0,T]$.
In yet other words one cannot achieve strict monotonic decay of energy as in the first order case.
A very natural problem is to extend our analysis to cover these cases. By using our analysis it is also possible
to refine the stability results in \cite{SY10} and remove the Lipschitz assumption on the nonlinearity in
the case of the second order implicit scheme.
For second order semi-implicit schemes it is expected that our method can be extended to prove an unconditional
stability result at least for
time steps which are moderately small. We plan to address these issues in a future publication.

\item General phase field models (possibly) with higher order dissipations. In \cite{CJPWW14}, the authors considered
the sixth order scalar model
\begin{align}
\partial_t u = \Delta( \epsilon^2 \Delta -W^{\prime\prime}(u) +\epsilon^2 \eta)(\epsilon^2 \Delta u-W^{\prime}(u)),
\end{align}
where $W(u)=\frac 14 (u^2-1)^2$ and $\eta>0$ is a given constant. This equation arises in the modeling of
pore formation in functionalized polymers \cite{GJXCP12}. The numerical experiments in \cite{CJPWW14} used implicit
time stepping together with Newton's method at each time step. From our point of view it will be interesting to
use the numerical schemes similar to \eqref{semi_e1} and establish the corresponding stability and error convergence
results. In a similar vein one can also consider the volume-preserving vector CH model in the same paper
(see (7) in \cite{CJPWW14}) and also the nonlinear diffusion model in \cite{BJL11}. Yet another possibility
is to study the model with general \emph{fractional} dissipation which is already mentioned in the introduction
of \cite{LQT14}. Also one can extend our analysis to the phase fields models of two-phase complex fluids (see \cite{SY14} for
a pioneering study in this direction).
In any case a first step in the
analysis is to establish similar results to \cite{LQT14}.

\end{itemize}
The above list is certainly not exhaustive. For example we did not include the analysis of the Allen-Cahn model
which will be quite similar to the CH case from our point of view. To keep the presentation simple we leave out
the case of dimensions $d=1$ and $d=3$ which can be similarly handled. It is a quite interesting problem to extend
our analysis to the model considered in \cite{SB11} where an additional forcing term is present.
One can also consider generalizing the
analysis herein to
finite difference schemes and even some hybrid schemes. In \cite{LQT14p1} we will introduce a completely new
approach to tackle some of these problems. Another direction is to consider  the phase field
models with stochastic noises. One can introduce similar numerical stabilization techniques as in the deterministic
case and prove stability and convergence in these settings.
We plan to investigate these problems in
the future.

\subsection*{Acknowledgment.}
We thank the anonymous referees for very helpful remarks and suggestions.
 D. Li was supported by an Nserc discovery grant.
 The research of Z. Qiao is partially supported by the Hong Kong Research Council GRF
grants 202112, 15302214 and NSFC/RGC Joint Research Scheme N\underline{\;\;}HKBU204/12.
 The research of T. Tang is mainly supported by Hong Kong Research Council GRF Grants
 and Hong Kong Baptist University FRG grants.

\frenchspacing
\bibliographystyle{plain}

\end{document}